\begin{document}
\setcounter{tocdepth}{1}

\newtheorem{theorem}{Theorem}    
\newtheorem{proposition}[theorem]{Proposition}
\newtheorem{conjecture}[theorem]{Conjecture}
\def\theconjecture{\unskip}
\newtheorem{corollary}[theorem]{Corollary}
\newtheorem{lemma}[theorem]{Lemma}
\newtheorem{sublemma}[theorem]{Sublemma}
\newtheorem{fact}[theorem]{Fact}
\newtheorem{observation}[theorem]{Observation}
\theoremstyle{definition}
\newtheorem{definition}{Definition}
\newtheorem{notation}[definition]{Notation}
\newtheorem{remark}[definition]{Remark}
\newtheorem{question}[definition]{Question}
\newtheorem{questions}[definition]{Questions}

\newtheorem{example}[definition]{Example}
\newtheorem{problem}[definition]{Problem}
\newtheorem{exercise}[definition]{Exercise}

\numberwithin{theorem}{section}
\numberwithin{definition}{section}
\numberwithin{equation}{section}

\def\reals{{\mathbb R}}
\def\torus{{\mathbb T}}
\def\integers{{\mathbb Z}}
\def\rationals{{\mathbb Q}}
\def\naturals{{\mathbb N}}
\def\complex{{\mathbb C}\/}
\def\heis{{\mathbb H}\/}
\def\distance{\operatorname{distance}\,}
\def\diststar{{\rm dist}^\star}
\def\sym{\operatorname{Symm}\,}
\def\support{\operatorname{support}\,}
\def\dist{\operatorname{dist}}
\def\Span{\operatorname{span}\,}
\def\degree{\operatorname{degree}\,}
\def\kernel{\operatorname{kernel}\,}
\def\dim{\operatorname{dim}\,}
\def\codim{\operatorname{codim}}
\def\trace{\operatorname{trace\,}}
\def\Span{\operatorname{span}\,}
\def\dimension{\operatorname{dimension}\,}
\def\codimension{\operatorname{codimension}\,}
\def\Gl{\operatorname{Gl}}
\def\nullspace{\scriptk}
\def\kernel{\operatorname{Ker}}
\def\ZZ{ {\mathbb Z} }
\def\p{\partial}
\def\rp{{ ^{-1} }}
\def\Re{\operatorname{Re} }
\def\Im{\operatorname{Im} }
\def\ov{\overline}
\def\eps{\varepsilon}
\def\lt{L^2}
\def\diver{\operatorname{div}}
\def\curl{\operatorname{curl}}
\def\etta{\eta}
\newcommand{\norm}[1]{ \|  #1 \|}
\def\expect{\mathbb E}
\def\bull{$\bullet$\ }
\def\det{\operatorname{det}}
\def\Det{\operatorname{Det}}
\def\multiR{\mathbf R}
\def\bestA{\mathbf A}
\def\bestB{\mathbf B}
\def\bestC{\mathbf C}
\def\Apq{\mathbf A_{p,q}}
\def\Apqr{\mathbf A_{p,q,r}}
\def\rank{\operatorname{rank}}
\def\rankk{\mathbf r}
\def\diameter{\operatorname{diameter}}
\def\bp{\mathbf p}
\def\bq{\mathbf q}
\def\bff{\mathbf f}
\def\bg{\mathbf g}
\def\bh{\mathbf h}
\def\bv{\mathbf v}
\def\bw{\mathbf w}
\def\bu{\mathbf u}
\def\bx{\mathbf x}
\def\by{\mathbf y}
\def\bz{\mathbf z}
\def\bzero{\mathbf 0}
\def\bL{\mathbf L}
\def\essd{\operatorname{essential\ diameter}}
\def\gl{\operatorname{Gl}}
\def\kbe{{\scriptk}_\be}

\def\mab{M}
\def\t2{\tfrac12}

\newcommand{\abr}[1]{ \langle  #1 \rangle}
\def\unitQ{{\mathbf Q}}
\def\mbfp{{\mathbf P}}

\def\aff{\operatorname{Aff}}
\def\T{{\mathcal T}}

\def\repair{\medskip \hrule \hrule \medskip}

\def\ovl{\overline{L}}
\def\bard{\bar\delta}
\def\tdelt{\tilde\delta}
\def\essinf{\operatorname{essinf}}
\def\esssup{\operatorname{esssup}}

\newcommand{\Norm}[1]{ \Big\|  #1 \Big\| }
\newcommand{\set}[1]{ \left\{ #1 \right\} }
\newcommand{\sset}[1]{ \{ #1 \} }

\def\one{{\mathbf 1}}
\def\onei{{\mathbf 1}_I}
\def\onee{{\mathbf 1}_E}
\def\onea{{\mathbf 1}_A}
\def\oneb{{\mathbf 1}_B}
\def\wonee{\widehat{\mathbf 1}_E}
\newcommand{\modulo}[2]{[#1]_{#2}}

\def\rint{ \int_{\reals^+} }
\def\Abest{{\mathbb A}}
\def\op{\operatorname{Op}}
\def\and{ \ \text{ and }\ }

\def\barrier{\bigskip\hrule\hrule\bigskip}

\def\symdif{\,\Delta\,}
\def\akd{{\mathbf A}_{k,d}}
\def\ak{{\mathbf A}_{k}}

\def\scriptf{{\mathcal F}}
\def\scripts{{\mathcal S}}
\def\scriptq{{\mathcal Q}}
\def\scriptg{{\mathcal G}}
\def\scriptm{{\mathcal M}}
\def\scriptb{{\mathcal B}}
\def\scriptc{{\mathcal C}}
\def\scriptt{\Phi}
\def\scripti{{\mathcal I}}
\def\scripte{{\mathcal E}}
\def\scriptv{{\mathcal V}}
\def\scriptw{{\mathcal W}}
\def\scriptu{{\mathcal U}}
\def\scripta{{\mathcal A}}
\def\scriptr{{\mathcal R}}
\def\scripto{{\mathcal O}}
\def\scripth{{\mathcal H}}
\def\scriptd{{\mathcal D}}
\def\scriptl{{\mathcal L}}
\def\scriptn{{\mathcal N}}
\def\scriptp{{\mathcal P}}
\def\scriptk{{\mathcal K}}
\def\scriptP{{\mathcal P}}
\def\scriptj{{\mathcal J}}
\def\scriptz{{\mathcal Z}}
\def\frakv{{\mathfrak V}}
\def\frakG{{\mathfrak G}}
\def\frakA{{\mathfrak A}}
\def\frakB{{\mathfrak B}}
\def\frakC{{\mathfrak C}}
\def\frakf{{\mathfrak F}}
\def\fraki{{\mathfrak I}}
\def\fcross{{\mathfrak F^{\times}}}

\def\boldf{\mathbf f}
\def\bolda{\mathbf a}
\def\boldb{\mathbf b}
\def\boldg{\mathbf g}
\def\bF{\mathbf F}
\def\bE{\mathbf E}
\def\bI{\mathbf I}
\def\be{\mathbf e}
\def\bA{\mathbf A}
\def\ba{\mathbf a}
\def\bB{\mathbf b}
\def\bs{\mathbf s}
\def\Estar{E^\star}
\def\bEstar{\mathbf E^\star}
\def\bEdagger{\mathbf E^\dagger}
\def\bAstar{\mathbf A^\star}
\def\Ee{{\mathcal E}}
\def\dorbit{\dist(\bE,\scripto(\bEstar))}

\def\tf{\tfrac{1}{2}}
\def\fg{\frakG}

\author{Michael Christ}
\address{
        Michael Christ\\
        Department of Mathematics\\
        University of California \\
        Berkeley, CA 94720-3840, USA}
\email{mchrist@berkeley.edu}
\thanks{Research supported in part by NSF grant DMS-1363324.}

\date{February 22, 2017. Revised March 23, 2017.}

\title
[Equality in Brascamp-Lieb-Luttinger inequalities]
{Equality in Brascamp-Lieb-Luttinger Inequalities}

\begin{abstract} 
An inequality of Brascamp-Lieb-Luttinger generalizes the Riesz-Sobolev
inequality, stating that certain multilinear functionals,
acting on nonnegative functions of one real variable  with
prescribed distribution functions, are maximized
when these functions are symmetrized. It is shown that under certain hypotheses,
when the functions are indicator functions of sets of prescribed measures, 
then up to the natural translation symmetries
of the inequality, the maximum is attained only by
intervals centered at the origin. 
Moreover, a quantitative form of this
uniqueness is established, sharpening the inequality. 
The hypotheses include an auxiliary genericity assumption which may not be necessary.
\end{abstract}

\maketitle

\section{Introduction}

For any Lebesgue measurable set $E\subset\reals^d$ satisfying
$0<|E|<\infty$, define $E^\star\subset\reals^d$ to be the closed ball
centered at $0$ satisfying $|E^\star|=|E|$. 
Let $J$ be a finite index set, and let $m\in\naturals$. 
Let $\scriptl=\{L_j: j\in J\}$ be a finite family of surjective
linear mappings $L_j:\reals^m\to\reals^d$.
Let $\bff = (f_j: j\in J)$ where $f_j:\reals^d\to[0,\infty]$ are Lebesgue measurable.
Define 
\begin{equation}
\Phi_\scriptl(\bff) = \int_{\reals^m} \prod_{j\in J} f_j\circ L_j,
\end{equation}
integrating with respect to Lebesgue measure.
In this paper, we analyze maximizers of $\Phi_\scriptl$
among all tuples of  indicator functions of sets of specified Lebesgue measures, in
the foundational case in which the dimension of the target spaces $\reals^d$ equals $1$.
By a maximizer, we will always mean a maximizer among tuples having specified measures.

Write $\bE = (E_j: j\in J)$
and $\bE^\star = (E_1^\star: j\in J)$.
Write $\scriptt_\scriptl(\bE) = \scriptt_\scriptl(\one_{E_j}: j\in J)$.
Brascamp, Lieb, and Luttinger \cite{BLL} have proved that for $d=1$,
\begin{equation}\label{eq:BLL} \scriptt_\scriptl(\bE) \le \scriptt_\scriptl(\bE^\star).  \end{equation}
Thus among $n$-tuples of sets with prescribed measures, the configuration in which each set
is an interval centered at the origin is a maximizer of $\scriptt_\scriptl$. 

In what circumstances, and to what degree, are maximizing $n$-tuples of sets unique?
This paper provides an answer, under circumstances that are rather general,
though not quite maximally so.
Putting it inexactly, we characterize maximizers for data 
in the {\em interior} of the set of all data for which a meaningful characterization may be possible,
under an auxiliary (concrete) genericity hypothesis on the data.
Moreover, we prove uniqueness in a stronger quantitative form, which
is not valid in general on the boundary of the set of such data.

The most fundamental example is the Riesz-Sobolev inequality.  Define 
\begin{equation}
\begin{aligned}
\scriptt_{\text{RS}}(E_1,E_2,E_3) 
& = \iint_{\reals^{d}\times\reals^d}
\one_{E_1}(x)\one_{E_2}(y)\one_{E_3}(-x-y)\,dx\,dy
\\&
= \iint_{\Sigma} \prod_{j=1}^3 \one_{E_j}(x_j)\,d\lambda(x_1,x_2,x_3)
\end{aligned}
\end{equation}
where $\Sigma = \{(x_1,x_2,x_3)\in (\reals^d)^3: x_1+x_2+x_3=0\}$, and $\lambda$ 
is the natural $2d$--dimensional Lebesgue measure on $\Sigma$;
$d\lambda = dx_i\,dx_j$ for any $i\ne j\in\{1,2,3\}$.
If $0<|E_j|<\infty$ for each index $j$,
and if the $3$--tuple of Lebesgue measures $(|E_j|^{1/d}: 1\le j\le 3)$
is strictly admissible in the sense that $|E_k|^{1/d} < |E_i|^{1/d}+|E_j|^{1/d}$
for each permutation $(i,j,k)$ of $(1,2,3)$,
then as was shown by Burchard \cite{burchard},  equality holds if and only if the sets $E_j$ are
(up to Lebesgue null sets) homothetic ellipsoids whose centers $c_j$ satisfy $c_1+c_2+c_3=0$.
In the borderline admissible case in which 
$|E_k|^{1/d} \le |E_i|^{1/d}+|E_j|^{1/d}$ for all permutations
with equality for some permutation, 
$\Phi_{\text{RS}}(E) =\Phi_{\text{RS}}(E^\star)$
if and only if $E_i,E_j,E_k$ are homothetic convex sets satisfying $-E_k = E_i + E_j$.


A trivial necessary and sufficient condition for 
$\Phi_{\text{RS}}(E)$ to be equal to $\Phi_{\text{RS}}(E^\star)$ is that
$-E_k$ should contain the sumset $E_i+E_j$, except for a Lebesgue null set.
$|E_i+E_j|^{1/d}$ can in general be as small as $|E_i|^{1/d}+|E_j|^{1/d}$,
so if $|E_k|^{1/d}>|E_i|^{1/d}+|E_j|^{1/d}$ then 
no conclusion can be drawn from equality except that
$-E_k$ contains $E_i+E_j$ up to a null set;
$-E_k\setminus (E_i+E_j)$ is an arbitrary subset of
$\reals^d\setminus (E_i+E_j)$ of measure $|E_k|-|E_i+E_j|$.
The nonadmissible case is in this sense degenerate, and will not be discussed in this paper. 


A second example is that of the Gowers forms $\scriptt_k$, for $2\le k\in\naturals$,
defined by
\begin{equation}
\scriptt_k(\bff) = \iint_{\reals^d\times (\reals^d)^{k}} 
\prod_{\alpha\in\{0,1\}^k} f_\alpha(x+\alpha\cdot\bh)\,dx\,d\bh
\end{equation}
where $(x,\bh)\in\reals^d\times(\reals^d)^k$,
$\bff=(f_\alpha: \alpha\in\{0,1\}^k)$, 
and $f_\alpha:\reals^d\to[0,\infty]$.
For Gowers {\em norms}, with $f_\alpha = \one_{E_\alpha}=\one_E$ for every index $\alpha$, 
the conclusions of our main theorems were established in \cite{christgowersnorms}.

The additive Euclidean group $\reals^m$ acts as a group of symmetries of the form $\scriptt_\scriptl$. 
For $y\in\reals^d$ and $f:\reals^d\to[0,\infty]$ define $\tau_y f(x)=f(x+y)$.
For any $\bv\in\reals^m$,
\[ \scriptt_\scriptl(\bff) = \int_{\reals^m} \prod_{j\in J} f_j(L_j(\bx+\bv))\,d\bx
= \int_{\reals^m} \prod_{j\in J} g_j(L_j(\bx))\,d\bx
= \scriptt_\scriptl(\bg)\]
where $g_j = \tau_{L_j(\bv)}f_j$.
Consequently, maximizers $\bE$ are never unique.

Other group actions are present, and are relevant to our discussion. 
The general linear group $\gl(m)$ acts on families $\scriptl$ of linear mappings $L_j:\reals^m\to\reals^d$ 
by $(L_j: j\in J) \mapsto (L_j\circ\psi: j\in J)$. 
The product of the groups of all Lebesgue measure-preserving affine automorphisms of $\reals^d$ acts,
by $(f_j: j\in J)\mapsto (f_j\circ\psi_j: j\in J)$.
The product $(0,\infty)^J$ of copies of the multiplicative group $(0,\infty)$
acts by $((L_j,e_j): j\in J)\mapsto ((r_jL_j,r_j^d e_j): j\in J)$.

There are other possible sources of nonuniqueness, besides the $\reals^m$ translation action.
Suppose for instance that $J=\{1,2,\dots,n\}$, that  $L_j$ is independent of $x_m$ for all $j<n$,
and $L_n(x)$ depends only on $x_m$.
Then $\scriptt_\scriptl(\bE)$ takes the form $|E_n|\tilde\scriptt_\scriptl(E_1,\dots,E_{n-1})$
where $\tilde\scriptt_\scriptl$ is another form of the same general type as $\scriptt$.
Thus $\Phi_\scriptl(\bE)$ depends only on $|E_n|$ and on $(E_1,\dots,E_{n-1})$. 
Our theorems include a nondegeneracy hypothesis which excludes examples like this one;
see condition (iii) of Definition~\ref{defn:nondegenerate}. 

Generalization to families of linear mappings $L_j:\reals^m\to\reals^d$ for $d>1$,
or even $\reals^m\to\reals^{d_j}$ with $d_j$ dependent on $j$, is not addressed in this paper.
The inequality of Brascamp-Lieb-Luttinger does have an extension to higher dimensions \cite{BLL}
with $d_j=d$ for all $j$,
under a symmetry hypothesis involving an action of the product $O(d)^J$
of $d$--dimensional orthogonal groups and an appropriate commutation relation
for $L_j$ in terms of this action. The inverse theorem for the Riesz-Sobolev inequality
was proved \cite{burchard} in two steps, with a first step for $d=1$ exploiting its ordering,
and a second step for higher dimensions which combined the one-dimensional
result with other ingredients.
We hope to extend Theorems~\ref{thm:main} and \ref{thm:main2}
to $d>1$ in the same spirit in a subsequent work,
by combining Theorem~\ref{thm:main2} with the techniques  
used in the analysis of the Riesz-Sobolev inequality in \cite{christRSult}.

A related class of inequalities is the H\"older-Brascamp-Lieb class,
of the form $\Phi_\scriptl(\bff)\le B(\scriptl,\bp)\prod_{j\in J}\norm{f_j}_{L^{p_j}}$,
where $L_j:\reals^d\to\reals^{d_j}$ are surjective linear mappings,
with the dimensions $d_j\ge 1$ of the target spaces arbitrary.  The natural 
analogue
$\Phi_\scriptl(\bff)\le\Phi_\scriptl(\bff^{\bf \star})$
of the  Brascamp-Lieb-Luttinger inequality is not true, in general, in this
level of generality.
In particular, it fails to hold for the Loomis-Whitney inequality for $\reals^d$. 

\section{Definitions and hypotheses}

Several definitions must be introduced before our main results can be formulated.
We specialize for the remainder of the paper to the case $d=1$.

\begin{definition}
Let $\be\in(0,\infty)^J$ and $\scriptl=(L_j: j\in J)$.
$\scriptk_\be\subset\reals^m$ is the closed convex set 
\begin{equation}
\scriptk_\be = \set{x\in\reals^m: |L_j(x)|\le\tfrac12 e_j\ \text{ for each $j\in J$} }.
\end{equation}
\end{definition}

\begin{definition} \label{defn:K_j}
For $j\in J$, $K_j = K_{j,\be,\scriptl}$ is the function from
$\reals^1$ to $[0,\infty]$ defined  by
\begin{equation} \label{eq:Kjdefn}
\int_{A} K_j = \scriptt_\scriptl(\bE) \end{equation}
for every Lebesgue measurable set $A\subset\reals^d$,
where $E_j=A$ and for every $i\in J\setminus\{j\}$, 
$E_i$ is the closed ball centered at $0\in\reals^1$ of measure $e_i$.
\end{definition}
The parameter $e_j$ does not enter into the definition of $K_j$.

Certain properties of these kernels $K_j$ will be exploited in the analysis.
Under the hypothesis that the intersection over $i\in\scriptl$
of the nullspaces of $L_i$ is equal to $\{0\}$, each $K_j$ is finite-valued and continuous. 
Indeed, up to a positive constant factor, each $K_j$ is the $m-1$--dimensional
Lebesgue measure of an $m-1$--dimensional slice of a balanced convex body in $\reals^m$,
and moreover, these slices have finite measures.
$K_j$ is even, and $[0,\infty)\owns r \mapsto K_j(rx)$ is nonincreasing for each $x\in\reals^1$.
By the Brunn-Minkowski inequality, $\log K_j$ is concave in the region in $\reals^1$
in which $K_j$ is strictly positive.
Since $K_j$ is also even and nonnegative, its restriction to $[0,\infty)$ is a nonincreasing function.
Moreover, the one-sided derivatives 
$D^\pm K_j(x) = \lim_{h\to 0^\pm} h^{-1} (K_j(x+h)-K_j(x))$
exist and are finite and nonpositive whenever $x>0$ and $K_j(x)>0$.
If  $0<x<x'$, if $K(x)>0$, and if $D^+K_j(x)<0$, 
then $D^\pm K_j(x')<0$, and $K(x)$ is strictly greater than $K(x')$.

\begin{definition} \label{defn:nondegenerate}
A family
$\scriptl=\{L_j: j\in J\}$ of linear mappings
$L_j:\reals^m\to \reals^1$ is nondegenerate if
\newline
(i) Each $L_j$ is surjective, 
\newline (ii)
For any $i\ne j\in J$, $L_i$ is not a scalar multiple of $L_j$,
and
\newline (iii) For each $j\in J$, $\cap_{i\ne j\in J} \kernel(L_i)=\{0\}$.
\end{definition}

\begin{definition} \label{defn:admissible}
$(\scriptl,\be)$ is admissible if for each $k\in J$
there exists $\bx\in \scriptk_\be$ satisfying $|L_k(\bx)| = e_k/2$.
\end{definition}

\begin{definition} \label{defn:strictlyadmissible}
Let $d=1$, and let $m\ge 2$.
Let $J$ be a finite index set.
Let $\scriptl=(L_j: j\in J)$ be a nondegenerate $J$-tuple of 
linear mappings $L_j:\reals^m\to\reals^1$.
Let $\be = (e_j: j\in J)\in(0,\infty)^J$.
Then $(\scriptl,\be)$ is strictly admissible  if 
the following two conditions hold for each $j\in J$.

(i) 
There exists $\bx\in\scriptk_\be$ satisfying $|L_j(\bx)| = \tfrac12 \be_j$
and $|L_i(\bx)|< \tfrac12\be_i$ for all $j\ne i\in J$.

(ii)
$D^- K_j(e_j/2)$ is strictly negative.
\end{definition}

Condition (i) implies that $K_j(e_j/2)$ is strictly positive.

For $\scriptt_{\text{RS}}$, 
Definition~\ref{defn:strictlyadmissible} of strict admissibility is equivalent
to Burchard's definition \cite{burchard} of this concept, while condition (ii) is redundant.

When the conditions in Definitions~\ref{defn:nondegenerate} and \ref{defn:strictlyadmissible} are satisfied,
$\scriptk_\be$ is a compact convex subset of $\reals^m$, has finitely
many extreme points, and is equal to their convex hull.
For each extreme point $\bx$, there must exist
at least $m$ indices $k\in J$ for which $|L_k(\bx)| = \tfrac12 e_k$.
Moreover, $\{L_j\in\scriptl: |L_j(\bx)|=e_j/2\}$ must span the dual space $\reals^{m*}$ of $\reals^m$.

The next concept will be a hypothesis of our main theorems.

\begin{definition}
Let $m\ge 2$.
Let $\scriptl$ be a nondegenerate finite family of surjective linear mappings $L_j:\reals^m\to\reals^1$.
Let $\be\in (0,\infty)^J$. An extreme point $\bx$ of $\scriptk_\be$
is said to be generic if there exist exactly $m$ indices $k\in J$
for which $|L_k(\bx)| = e_k/2$.

$(\scriptl,\be)$ is said to be generic if every extreme point of $\scriptk_\be$
is generic.
\end{definition}

If $m=2$ and $(\scriptl,\be)$ is nondegenerate and strictly admissible
then $\scriptk_\be$ is necessarily generic. 

The following consequence of genericity will be exploited. 

\begin{lemma} \label{lemma:independence}
Let $m\ge 2$, and let $d=1$.  Let $\scriptl$ be nondegenerate, and let $(\scriptl,\be)$ be generic.
If $J'\subset J$ has cardinality $|J'|\le m$, 
and if there exists $\bx\in \scriptk_\be$ satisfying $|L_j(\bx)|=e_j/2$ for each $j\in J'$, 
then $\scriptl'=\{L_j: j\in J'\}$  is linearly independent. 
\end{lemma}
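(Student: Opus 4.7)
The plan is to pass from the given point $\bx$ to an extreme point of $\scriptk_\be$ at which \emph{every} constraint indexed by $J'$ is still active, and then to invoke the genericity hypothesis together with the observation (recorded in the paragraph following Definition~\ref{defn:strictlyadmissible}) that the active linear functionals at any extreme point must span $(\reals^m)^*$.

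First I would use $|L_j(\bx)| = e_j/2 > 0$ for $j \in J'$ to assign definite signs $\sigma_j = \operatorname{sign}(L_j(\bx)) \in \{-1,+1\}$ and consider
\[ F = \{\by \in \scriptk_\be : L_j(\by) = \sigma_j e_j/2 \text{ for every } j\in J'\}. \]
For any $\by\in\scriptk_\be$ one has $\sigma_j L_j(\by) \leq e_j/2$, and simultaneous equality for all $j\in J'$ characterizes membership in $F$; hence $F$ is the face of $\scriptk_\be$ exposed by the linear functional $\sum_{j\in J'}\sigma_j L_j$, and it is nonempty because it contains $\bx$. As a nonempty compact convex set, $F$ has an extreme point $\bx^*$, and because $F$ is a face of $\scriptk_\be$, $\bx^*$ is also an extreme point of $\scriptk_\be$.

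Next, set $J^*=\{k\in J : |L_k(\bx^*)|=e_k/2\}$. From $\bx^*\in F$ we get $J'\subset J^*$, and genericity of $(\scriptl,\be)$ forces $|J^*|=m$. By the observation recorded after Definition~\ref{defn:strictlyadmissible}, $\{L_k : k\in J^*\}$ must span $(\reals^m)^*$: otherwise a nonzero vector $\bv$ in the common kernel of those active $L_k$ would allow a short line segment $\bx^*+t\bv$ ($|t|$ small) to remain in $\scriptk_\be$, since the inactive constraints are strictly satisfied at $\bx^*$, contradicting extremality. Since $|J^*|=m=\dim(\reals^m)^*$, this spanning set is a basis of $(\reals^m)^*$, so any subset---in particular $\{L_j : j\in J'\}$---is linearly independent.

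I do not foresee a serious obstacle: the lemma is essentially a convex-geometric repackaging of the genericity hypothesis, and every ingredient (the exposed-face construction, the fact that extreme points of a face are extreme points of the ambient polytope, and the spanning property at extreme points) is either entirely standard or has been flagged explicitly in the paper's preliminaries.
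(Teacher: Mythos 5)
Your proof is correct and follows essentially the same route as the paper: pass to an extreme point of the slice $\{\by\in\scriptk_\be: L_j(\by)=L_j(\bx)\ \forall j\in J'\}$ (your exposed face $F$ is exactly this set), note that the active index set there contains $J'$, and invoke genericity. You supply two details the paper leaves implicit --- that $F$ is an exposed face so its extreme points are extreme points of $\scriptk_\be$, and that the $m$ active functionals at a generic extreme point span $(\reals^m)^*$ and hence form a basis --- which is a welcome tightening rather than a different argument.
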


\begin{proof}
Let $\scriptl',\bx$ satisfy the hypotheses.
If $\scriptl'$ is not linearly independent then consider $S=\{\by\in\scriptk_\be:
L_j(\by)=L_j(\bx) \text{ for every } j\in J'\}$.
This is a compact convex subset of $\reals^m$, so has extreme points. Let $\bz$ be
any extreme point of $S$, and consider $J''=\{j\in J: |L_j(\bz)| = e_j/2\}$.
Then $J''\supset J'$. By the genericity hypothesis, $\{L_j: j\in J''\}$ is linearly independent. 
Therefore the same holds for the subset $J'$.
\end{proof}

Let $\scripto(\bEstar)$ denote the orbit of $\bEstar$
under the translation symmetry group $\reals^m$. 
The natural notion of distance from $\bE$ to $\scripto(\bEstar)$ 
is as follows.

\begin{definition}
\begin{equation} \label{eq:distancedefn}
\dist(\bE,\scripto(\bE^\star)) = \inf_{\bv\in\reals^m} \max_{j\in J} |E_j\symdif (E_j^\star + L_j(\bv))|.
\end{equation}
\end{definition}

It is elementary that for each tuple $\bE$ of sets with finite, positive measures, 
this infimum is actually attained by some $\bv$.

\section{Main results}

\begin{theorem} \label{thm:main}
Let $d=1$ and $m\ge 2$.
Let $\scriptl=\{L_j: j\in J\}$ be a nondegenerate finite collection of 
linear mappings $L_j:\reals^m\to\reals^1$.
Let $\be\in(0,\infty)^J$. Suppose that $(\scriptl,\be)$ is strictly admissible and generic.
Let $\bE$ be a $J$-tuple of Lebesgue measurable subsets of $\reals^1$
satisfying $|E_j|=e_j$ for each $j\in J$. Then
$\scriptt_\scriptl(\bE)=\scriptt_\scriptl(\bE^\star)$
if and only if there exists $\bv\in\reals^m$ satisfying
\begin{equation} E_j = E_j^\star+L_j(\bv) \end{equation}
for every $j \in J$.
\end{theorem}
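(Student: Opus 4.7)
The plan is to proceed in two stages: first show that each $E_j$ must, up to null sets, be an interval, and then identify the centers of these intervals with the translation orbit of $\bEstar$. For the first stage, fix $j\in J$ and decompose
\[ \scriptt_\scriptl(\bE) \;=\; \int_\reals \one_{E_j}(t)\,M_j(t)\,dt, \qquad
M_j(t) \;=\; \int_{L_j^{-1}(t)}\prod_{i\ne j}\one_{E_i}(L_i(\bx))\,d\sigma(\bx). \]
For any $s>0$ and any interval $I$ of length $2s$, a translation (using the $\reals^m$-symmetry of $\scriptt_\scriptl$) together with the BLL inequality (noting that $[-s,s]^\star=[-s,s]$) yields $\int_I M_j \le \int_{[-s,s]} K_j$, whence the majorization $\int_{[-s,s]} M_j^\star \le \int_{[-s,s]} K_j$ for every $s>0$. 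Combined with the equality $\int\one_{E_j} M_j = \int\one_{E_j^\star} K_j$, the rearrangement inequality $\int\one_{E_j} M_j \le \int\one_{E_j^\star} M_j^\star$ is forced to be an equality, so $E_j$ coincides (up to null sets) with a superlevel set of $M_j$. The strict negativity $D^-K_j(e_j/2)<0$ from strict admissibility is invoked here to fix the critical level and rule out degeneracies. A bootstrap via the Pr\'ekopa--Leindler inequality then upgrades these superlevel sets to intervals: once enough $E_i$ are known to be intervals, $P(\bE):=\bigcap_i L_i^{-1}(E_i)$ is convex, and the remaining marginals $M_j$ inherit log-concavity, so each set $\{M_j\ge c\}$ is an interval.

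For the second stage, write each $E_j=[c_j-e_j/2,\,c_j+e_j/2]$ and set
\[ F(\bw) \;=\; \Big|\bigcap_{j\in J} L_j^{-1}\bigl([w_j - e_j/2,\,w_j+e_j/2]\bigr)\Big|, \qquad \bw\in\reals^J. \]
Then $F$ is log-concave (by Brunn--Minkowski), is invariant under the $m$-dimensional subspace $L(\reals^m):=\{(L_j(\bv))_{j\in J}:\bv\in\reals^m\}\subset\reals^J$ (by the translation symmetry of $\scriptt_\scriptl$), and attains its maximum $|\kbe|$ on that subspace. To conclude, it suffices to show $F$ is \emph{strictly} log-concave in directions transverse to $L(\reals^m)$; then $F(\mathbf{c})=|\kbe|$ forces $\mathbf{c}\in L(\reals^m)$, giving a $\bv\in\reals^m$ with $c_j=L_j(\bv)$ for every $j$, and hence $E_j = E_j^\star + L_j(\bv)$. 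This transverse strict log-concavity is precisely where the genericity hypothesis and Lemma~\ref{lemma:independence} enter: at each extreme point of $\kbe$ the exactly $m$ active forms $L_j$ constitute a basis of $\reals^{m*}$, which provides enough local rigidity on the boundary of $\kbe$ to preclude shifts of the individual constraints that are not induced by a single element of $\reals^m$.

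The main obstacle is the first stage. The marginals $M_j$ are not a priori log-concave --- they become so only once the other $E_i$ are known to be intervals --- creating a chicken-and-egg tension. A viable resolution is to combine the cumulative majorization with compactness and a continuous one-at-a-time replacement of $E_j$ by superlevel sets of $M_j$: the equality hypothesis prevents any such replacement from strictly increasing $\scriptt_\scriptl$, and iteration together with the strictness in condition (ii) of Definition~\ref{defn:strictlyadmissible} should force each $E_j$ to already be an interval. The nondegeneracy condition (iii) of Definition~\ref{defn:nondegenerate} is needed throughout to guarantee that each $M_j$ is finite-valued and continuous, so that the replacement argument is well-posed.
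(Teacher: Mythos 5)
Your Stage 1 contains the essential gap, and it is the entire difficulty of the theorem. The reduction to ``$E_j$ is a superlevel set of $M_j$'' via equality in the Hardy--Littlewood inequality is fine in outline (modulo the plateau issue when $\{M_j=c\}$ has positive measure, in which case $E_j$ is only squeezed between $\{M_j>c\}$ and $\{M_j\ge c\}$). But a superlevel set of $M_j$ is an interval only if $M_j$ is unimodal, and $M_j$ is built from the other sets $E_i$, which at this stage are arbitrary measurable sets. Your proposed resolution --- iterated one-at-a-time replacement of each $E_j$ by a superlevel set of $M_j$ --- has no engine: since equality already holds in BLL, each such replacement leaves $\scriptt_\scriptl$ \emph{unchanged}, so there is no monotone quantity being driven, no convergence argument, and no reason the limit (if any) consists of intervals, let alone that the original sets do. Moreover, condition (ii) of Definition~\ref{defn:strictlyadmissible} concerns $D^-K_j(e_j/2)$, where $K_j$ is computed with the \emph{other sets replaced by centered intervals}; it gives no control whatsoever on the shape of $M_j$, so invoking it to ``rule out degeneracies'' at this point is not an argument. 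The bootstrap via Pr\'ekopa--Leindler is fully circular: to conclude $E_j$ is an interval you need every $E_i$, $i\ne j$, to already be an interval, and there is no base case. For comparison, even for $\Phi_{\text{RS}}$ with three sets, the corresponding step is the hard core of Burchard's theorem and does not follow from a bathtub argument.

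The paper avoids this problem entirely by proving the quantitative Theorem~\ref{thm:main2} first and deducing Theorem~\ref{thm:main} as a corollary. The BLL rearrangement flow (Proposition~\ref{prop:flow}) reduces matters to tuples $\bE$ with $\dist(\bE,\scripto(\bEstar))$ small; a first- and second-order perturbative expansion of $\scriptt_\scriptl$ about $\bEstar$ (Section~\ref{section:pert}) then shows each $E_j$ is close to an interval with a quadratic gain; and the case of arbitrary intervals is handled by a separate quantitative Brunn--Minkowski argument on the polytope $\scriptk_\be$ and the graph of its extreme points (Proposition~\ref{prop:intervals}), which is where genericity is genuinely used. Your Stage 2 is close in spirit to the paper's Proposition~\ref{prop:weakforintervals} --- though the paper argues via equality in Brunn--Minkowski forcing $K(\bv)$ to be a translate of $K(\bzero)$ and then uses admissibility pointwise, rather than a ``transverse strict log-concavity'' of $F$, which you assert but do not prove and which is essentially equivalent to the quantitative Proposition~\ref{prop:intervals}. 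As it stands, the proposal does not yield a proof.
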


Thus maximizing tuples $\bE$ are unique, up to the action of the symmetry
group $\reals^m$.
Here, and throughout the presentation, 
two sets are considered to be equal if their symmetric difference is a Lebesgue null set.
Thus the conclusion is that there exists $\bv$ such that for every $j\in J$,
\begin{equation} |E_j\symdif (E_j^\star+L_j(\bv))|=0.\end{equation}

The uniqueness statement can be put into more quantitative form 
in terms of the distance from $\bE$ to $\scripto(\bEstar)$.

\begin{theorem} \label{thm:main2}
Let $d,m,J,\scriptl,\scriptt_\scriptl$ be as in Theorem~\ref{thm:main}.
Let $S$ be a compact subset of $(0,\infty)^J$
such that every $\be\in S$ satisfies the hypotheses
of Theorem~\ref{thm:main}.
Then there exists $c>0$ such that for every $\be\in S$, and for every
$J$-tuple $\bE$ of Lebesgue measurable subsets of $\reals^1$
satisfying $|E_j|=e_j$ for each $j\in J$,
\begin{equation} \label{eq:mainconclusion}
\scriptt_\scriptl(\bE)\le \scriptt_\scriptl(\bE^\star) - c\dist(\bE,\scripto(\bE^\star))^2.
\end{equation} \end{theorem}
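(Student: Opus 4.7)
The proof will combine a compactness argument, extracting strong convergence from near-extremal sequences, with a quantitative perturbative analysis of $\Phi_\scriptl$ near maximizers. I plan to split into a \emph{far} regime $\dist(\bE,\scripto(\bEstar)) \ge \delta_0$ for some fixed $\delta_0 > 0$, and a \emph{near} regime $\dist(\bE,\scripto(\bEstar)) < \delta_0$, establishing the desired inequality on each separately with a uniform constant.

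For the far regime, I would argue by contradiction. If the theorem failed here, I could extract $\be_n \in S$ and tuples $\bE_n$ with $|E_{n,j}| = e_{n,j}$, $\dist(\bE_n,\scripto(\bEstar_n)) \ge \delta_0$, yet $\Phi_\scriptl(\bEstar_n) - \Phi_\scriptl(\bE_n) \to 0$. Compactness of $S$ gives $\be_n \to \be \in S$. A localization step, translating each $\bE_n$ by the optimal $\bv_n$ from the definition of $\dist$ and using that a near-maximal $\bE_n$ must concentrate near some bounded set, allows passage to an $L^1$-limit $\bE$ with $|E_j| = e_j$. Continuity of $\Phi_\scriptl$ on such uniformly integrable tuples then yields $\Phi_\scriptl(\bE) = \Phi_\scriptl(\bEstar)$, and Theorem~\ref{thm:main} forces $\bE \in \scripto(\bEstar)$, contradicting $\dist(\bE_n,\scripto(\bEstar_n)) \ge \delta_0$. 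Since $\dist(\bE,\scripto(\bEstar))$ is bounded above by $\sum_j e_j$ uniformly in $\be \in S$, the resulting uniform positive lower bound on the gap converts into a quadratic lower bound in this regime.

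For the near regime, after replacing $\bE$ by the optimally translated representative, write $P_j = E_j \setminus E_j^\star$, $N_j = E_j^\star \setminus E_j$, $F_j = P_j \cup N_j$, so that $|P_j| = |N_j|$ and $\max_j |F_j| = \dist(\bE,\scripto(\bEstar)) < \delta_0$. Setting $g_j = \one_{P_j} - \one_{N_j}$ and expanding
\[
\prod_{j \in J} \one_{E_j}(L_j\bx) = \sum_{S \subseteq J} \prod_{j \in S} g_j(L_j\bx) \prod_{j \notin S} \one_{E_j^\star}(L_j\bx),
\]
then integrating, the first-order ($|S|=1$) term equals $\sum_j \int K_j g_j$; by evenness and monotonicity of $K_j$, the mass constraint $\int g_j = 0$, and crucially the strict slope $D^-K_j(e_j/2) < 0$, this is bounded above by $-c_\be \sum_j |F_j|^2$ for some $c_\be > 0$. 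The contributions with $|S| \ge 3$ are $O(|F|^{|S|}) = o(|F|^2)$ in absolute value, bounded by applying the Brascamp-Lieb-Luttinger inequality to $(\one_{F_j})_{j \in S}$ together with $(\one_{E_j^\star})_{j \notin S}$.

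The main obstacle is the second-order ($|S|=2$) contributions, which are of the same order $O(|F|^2)$ as the first-order negative term and have no definite sign: indeed, for a pure-translation perturbation $\bE = (E_j^\star + L_j\bv)_j$ with small $\bv$, they must exactly cancel the first-order term so that $\Phi_\scriptl$ is preserved. I plan to control them by exploiting the optimality of the translation $\bv^\star$ used to define $\dist(\bE, \scripto(\bEstar))$: a first-variation analysis of $\bv \mapsto \sum_j |E_j \symdif (E_j^\star + L_j\bv)|$, or an $\ell^2$ analogue introduced for technical smoothness, at its minimum forces the signed imbalances of the $F_j$ to be orthogonal, in an appropriate sense, to the translation subspace $T = \{(L_j\bv)_{j\in J} : \bv \in \reals^m\} \subseteq \reals^J$. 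On the orthogonal complement of $T$, the total quadratic form coming from the $|S|=1$ and $|S|=2$ terms should be strictly positive-definite, the genericity hypothesis ensuring that the contact kernel structure at extreme points of $\scriptk_\be$ is rigid enough to yield coercivity. Uniformity of all constants over $\be \in S$ will follow from compactness of $S$ and continuous dependence of $K_j$ and of the relevant quadratic form on $\be$.
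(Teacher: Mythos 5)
Your overall architecture (perturbative expansion near the orbit, a separate argument far from it) matches the paper's in outline, and you have correctly identified the central difficulty: the $|S|=2$ terms are of the same order as the negative first-order term and must cancel it exactly along the translation directions. But there are two genuine gaps. First, your far regime is circular as written: you invoke Theorem~\ref{thm:main} to conclude that an $L^1$-limit with $\Phi_\scriptl(\bE)=\Phi_\scriptl(\bEstar)$ lies in $\scripto(\bEstar)$, but in the paper Theorem~\ref{thm:main} is deduced from Theorem~\ref{thm:main2}, and you supply no independent proof of it. Moreover the "localization step" that yields $L^1$-precompactness of a near-maximizing sequence is itself a nontrivial concentration claim (pieces of $E_{n,j}$ can drift apart without destroying near-maximality a priori), and you give no argument for it. The paper avoids both issues by using the Brascamp--Lieb--Luttinger flow (Proposition~\ref{prop:flow}): along the flow $\scriptt_\scriptl$ is nondecreasing and $\dist(\bE(t),\scripto(\bEstar))$ varies continuously from its initial value down to $0$, so one stops the flow at the first time the distance equals $\delta_0$ and applies the near-regime estimate there; since $\dist(\bE,\scripto(\bEstar))$ is bounded above uniformly on $S$, this converts into the quadratic bound for all $\bE$.

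Second, in the near regime the decisive step --- that the combined quadratic form from the $|S|=1$ and $|S|=2$ terms is coercive on the complement of the translation subspace $T$ --- is asserted ("should be strictly positive-definite") but not proved, and this is where essentially all the work lies. Two separate difficulties are hidden there. (i) The perturbations $f_j$ are infinite-dimensional, and the second-order kernels $L_{i,j}$ only see the aggregated masses of $f_j$ near $\pm e_j/2$; to reduce to a finite-dimensional form one must first show each $E_j$ is close to an interval. The paper does this not by diagonalizing a Hessian but by a balancing trick: for each fixed $n$ it translates so that the aggregated perturbation $F_n$ vanishes, which kills every second-order term involving $n$; applying the BLL inequality to the tuple with $E_n$ replaced by $E_n^\star$ then bounds the remaining first- and second-order terms from above by $O(\delta^3)$, isolating $\langle K_n,f_n\rangle$ and forcing $E_n$ to be nearly an interval (Lemma~\ref{lemma:nearlyinterval}, Proposition~\ref{prop:nearlyintervals}). (ii) Even for exact intervals, the "coercivity transverse to $T$" is not a soft consequence of genericity: it is Proposition~\ref{prop:intervals}, proved by a global convex-geometric argument --- a quantitative Brunn--Minkowski gain for $K(\bw)$ localized near each edge of the $1$-skeleton of the polytope $\scriptk_\be$, combined with connectivity of the graph of extreme points and the admissibility hypothesis to propagate the rigidity to all coordinates $w_j$. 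Your proposed first-variation/orthogonality argument for the distance minimizer does not interface with the quadratic form $\sum_j\langle K_j,f_j\rangle+\sum_{i<j}\langle L_{i,j},f_i\otimes f_j\rangle$ (different bilinear pairings), and no mechanism is offered to produce the strict gain. Until (i) and (ii) are supplied, the near-regime argument is incomplete.
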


The exponent $2$ in the conclusion is optimal.
This inequality is not scale-invariant, but this is no contradiction
the hypothesis of compactness of $S$ precludes free scaling.

Theorem~\ref{thm:main} is an immediate consequence of Theorem~\ref{thm:main2}.
Indeed, it is elementary that if $E\subset\reals$
is a Lebesgue measurable set satisfying $0<|E|<\infty$,
and if  $\inf_I |E\symdif I|=0$
where the infimum is taken over all intervals $I\subset\reals$
satisfying $|I|=|E|$, then there exists an interval $I$ such that
$E=I$, that is, $|E\symdif I|=0$.
One proof is that since the mapping $t\mapsto |E\cap(I+t)|$ is continuous,
it assumes its minimum value.
Alternatively, $|I\symdif I'|\le |I\symdif E| + |E\symdif I'|$.
Therefore if $|I_n\symdif E|\to 0$, then the centers of the intervals
$I_n$ form a Cauchy sequence. 
We will prove Theorem~\ref{thm:main2} directly, and deduce Theorem~\ref{thm:main}
as a corollary.

It was shown in \cite{christgowersnorms} that for Gowers norms,
that is, for Gowers forms 
involving sets satisfying $E_\alpha=E_\beta$
for all $\alpha,\beta \subset\{0,1\}^k$,
or more generally for sets whose measures satisfy 
$|E_\alpha|=|E_\beta|$ for all $\alpha,\beta \subset\{0,1\}^k$,
the conclusion of Theorem~\ref{thm:main2} holds. In that situation, the genericity
hypothesis is violated; in fact, no extreme points are generic. 
(However, for all $\be$ outside a lower-dimensional set, 
the Gowers forms do satisfy the genericity hypothesis.)
Thus that hypothesis is superfluous in at least one situation. 

The case $m=2$ of Theorem~\ref{thm:main}
seems to be simpler than the general case. It
was treated in \cite{christflock} by an extension of the analysis
of Burchard \cite{burchard}, assuming $\be$ to be admissible
but not necessarily strictly admissible. We have not been able to
treat the case $m>2$ by that same method. For 
$m=2$, the genericity assumption  is a consequence of strict admissibility,
and the hypothesis on $K_j$ is also redundant.
Theorem~\ref{thm:main2} is new, even for $m=2$, except in special
cases such as $\Phi_{\text{RS}}$.


The genericity hypothesis is not natural in this theory, but simplifies
considerations. It is used principally in a step of the proof
of Proposition~\ref{prop:intervals}, which treats the case in which each set $E_j$
is an interval, but the centers of these intervals are arbitrary. 
It is also invoked in the proofs of Lemmas~\ref{lemma:LijLipschitz}
and \ref{lemma:quadraticexpansion}.
It is conceivable that a more careful execution of those proofs 
could remove this hypothesis.

The following nonquantitative uniqueness result for tuples $\bE$ of intervals is easy to establish,
under less restrictive hypotheses than those of Theorem~\ref{thm:main}.
It is not part of the development of our main theorems, but merits notice.

\begin{proposition} \label{prop:weakforintervals}
Let $d=1$.
Let $I_j\subset\reals$ be closed intervals of positive finite lengths centered at $0$.
For $\bv\in \reals^J$, define $\Psi(\bv)$ by \eqref{Psidefn}.
Set 
$\be = (|I_j|: j\in J)$. 
If $\scriptl$ is nondegenerate and if $(\scriptl,\be)$ is admissible, then
for any $\bv\in\reals^J$, $\Psi(\bv) = \Psi(\bzero)$ if and only if there exists $\by\in\reals^m$
such that $L_j(\by)=v_j$ for every $j\in J$.
\end{proposition}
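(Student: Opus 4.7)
The plan is to prove the nontrivial implication via a Brunn--Minkowski argument exploiting $\bv \leftrightarrow -\bv$ symmetry, then to extract the preimage $\by$ from a resulting center of symmetry using admissibility. One direction is immediate: if $L_j(\by) = v_j$ for all $j$, substituting $\bx \mapsto \bx - \by$ in the integral defining $\Psi$ gives $\Psi(\bv) = \Psi(\bzero)$.

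For the converse, introduce
$$ C(\bv) := \{\bx \in \reals^m : L_j(\bx) - v_j \in I_j \text{ for all } j \in J\}, $$
so that $\Psi(\bv) = |C(\bv)|$, $C(\bzero) = \kbe$, and $C(-\bv) = -C(\bv)$ (since $-I_j = I_j$). A direct check gives the Minkowski inclusion $\tfrac12 C(\bv) + \tfrac12 C(-\bv) \subset \kbe$: for $\bx_i \in C(\pm\bv)$, each $L_j$-value of the midpoint is an average of numbers in $[v_j - e_j/2,\, v_j + e_j/2]$ and $[-v_j - e_j/2,\, -v_j + e_j/2]$, hence lies in $I_j$. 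Nondegeneracy forces $\kbe$ to contain a neighborhood of $\bzero$, so $|\kbe| > 0$. Assuming $\Psi(\bv) = \Psi(\bzero)$, Brunn--Minkowski together with $|C(\bv)| = |C(-\bv)|$ gives
$$ |\kbe| \;\ge\; \bigl| \tfrac12 C(\bv) + \tfrac12 C(-\bv) \bigr| \;\ge\; |C(\bv)| \;=\; |\kbe|, $$
with equality throughout. The first equality forces $\tfrac12 C(\bv) + \tfrac12 C(-\bv) = \kbe$ modulo null sets; the equality case of Brunn--Minkowski, combined with equal measures, forces $C(-\bv) = C(\bv) + \bz$ for some $\bz \in \reals^m$. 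Together with $C(-\bv) = -C(\bv)$, this makes $C(\bv)$ centrally symmetric about $\bp := -\bz/2$.

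It remains to show $L_k(\bp) = v_k$ for every $k \in J$, whence $\bv = L(\bp)$. Fix $k$; by admissibility, pick $\bx_0 \in \kbe$ with $L_k(\bx_0) = e_k/2$ (replacing $\bx_0$ by $-\bx_0$ if needed, using central symmetry of $\kbe$). By the set equality, write $\bx_0 = \tfrac12(\bx_1 + \bx_2)$ with $\bx_1 \in C(\bv)$ and $\bx_2 \in C(-\bv)$. Then $L_k(\bx_1) + L_k(\bx_2) = e_k$, and the upper bounds $L_k(\bx_1) \le v_k + e_k/2$ and $L_k(\bx_2) \le -v_k + e_k/2$ force $L_k(\bx_1) = v_k + e_k/2$. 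The identical argument applied to $-\bx_0$ produces $\bx_1' \in C(\bv)$ with $L_k(\bx_1') = v_k - e_k/2$. Thus $L_k$ attains both endpoints of $[v_k - e_k/2,\, v_k + e_k/2]$ on $C(\bv)$, and since the range of a linear functional on a convex body centrally symmetric about $\bp$ is symmetric about its value at $\bp$, we conclude $L_k(\bp) = v_k$.

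The principal subtlety is the clean invocation of the equality case of Brunn--Minkowski, which yields translation (not only homothety) because the measures are already equal; the remainder is bookkeeping, with admissibility providing precisely the saturating points in $\kbe$ needed to realize both endpoints of the range of $L_k$ on $C(\bv)$.
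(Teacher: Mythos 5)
Your proposal is correct and follows essentially the same route as the paper: the convexity inclusion $\tfrac12 K(\bv)+\tfrac12 K(-\bv)\subset K(\bzero)$, Brunn--Minkowski with its equality case to conclude the slices are translates, and then the admissibility point $\bx_0$ together with $-\bx_0$ to pin down $v_k$. The only (cosmetic) difference is in the endgame, where you phrase the conclusion via central symmetry of $C(\bv)$ about $\bp$ and the symmetry of the range of $L_k$, while the paper runs the equivalent implication $K(\bv)+\by=K(\bzero)$ directly against $\pm\bx_0$.
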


\medskip
The method of proof of the two main results is as follows.
It suffices to establish Theorem~\ref{thm:main2}.
There exists a measure-preserving flow on $J$--tuples of sets,
under which the functional $\scriptt_\scriptl$ is nondecreasing and varies continuously. 
Therefore it suffices to establish \eqref{eq:mainconclusion}
for small perturbations of intervals centered at the origin. 
That is, it suffices to
prove that there exists $\delta_0>0$ such that
$\scriptt_\scriptl(\bE)\le\scriptt_\scriptl(\bE^\star)-c\dist(\bE,\scripto(\bE^\star))^2$
whenever $\be\in S$,
$\dist(\bE,\scripto(\bE^\star))\le\delta_0$,
and the other hypotheses of Theorem~\ref{thm:main2} are satisfied.

We expand the functional $\bE\mapsto \scriptt_\scriptl(\bE)$
in a perturbative series about $\bEstar$,
initially to first order and subsequently to second order,
and more generally about $(E_j^\star+L_j(\bv): j\in J)$,
with $\bv$ chosen to optimize the information obtained. We first 
use such an expansion to show that each $E_j$ has small symmetric
difference with an interval of length $e_j$.
It then remains to control the relative locations of the centers of these approximating intervals.

The case in which all sets $E_j$ are equal to intervals is analyzed separately, 
using convex geometry and ideas related to the Brunn-Minkowski inequality.
In a simple final step, these two complementary analyses are combined
to establish the full result.

The author is indebted to Kevin O'Neill for useful comments on the exposition.

\section{A flow of sets}

\begin{proposition}\label{prop:flow}
There exists a flow $(t,E)\mapsto E(t)$ of equivalence classes of Lebesgue measurable subsets
of $\reals^1$ with finite measures, defined for $t\in[0,1]$,
having the following properties for all equivalence classes of Lebesgue measurable subsets of $\reals$
with finite, positive measures.
\begin{enumerate}
\item
$E(0)=E$ and $E(1) = E^\star$.
\item 
Preservation of measure:
$|E(t)| = |E|$ for all $t\in[0,1]$.
\item
Continuity:
$|E(s)\symdif E(t)|\to 0$ as $s\to t$.
\item
Inclusion monotonicity:
If $E\subset\tilde E$ then $E(t)\subset \tilde E(t)$ for all $t\in[0,1]$.
\item
Contractivity: 
$|E_1(t)\symdif E_2(t)|\le |E_1\symdif E_2|$ for all sets $E_1,E_2$ and all $t$.
\item
Independence of past history:
If $0\le s\le t\le 1$ then
$E(t)$ depends only on $E(s),s,t$.
\item Functional continuity:
$\scriptt_\scriptl(\bE(s))\to\scriptt_\scriptl(\bE(t))$ as $s\to t$.
\item 
Functional monotonicity:
If $E_j\subset\reals$ are measurable sets with $|E_j|\in(0,\infty)$ then
the function $t\mapsto \scriptt_\scriptl(\bE(t))$ is nondecreasing on $[0,1]$.
\end{enumerate}
\end{proposition}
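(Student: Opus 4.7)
The plan is to construct the flow as an explicit, continuous, measure-preserving interpolation from $E$ to $E^\star$ driven by a ``slide-toward-the-origin'' dynamics. For $E$ a finite disjoint union of open intervals $E = \bigsqcup_k (a_k, b_k)$, each component translates at unit speed toward the origin; two components that come into contact are replaced by their union (again an interval) and continue to slide with the new center; a component whose center has reached $0$ freezes. After finite wall-clock time $T(E)$ the dynamics produces $E^\star$. I then apply a universal increasing bijection $\phi:[0,1)\to[0,\infty)$, for instance $\phi(t) = \tan(\pi t/2)$, to obtain $E(t) := E(\phi(t))$ and set $E(1) := E^\star$. For general measurable sets of finite measure I would extend by density in the symmetric-difference metric, using contractivity established on the dense subclass.

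Properties (1), (2), (3), and (6) are immediate from the construction. Properties (4) (inclusion monotonicity) and (5) (contractivity) are verified on the subclass of nice sets by a short case analysis of how the sliding and merging rules evolve two coupled configurations (for (4), a pair $E \subseteq \tilde E$; for (5), a pair $E_1,E_2$), checking that the relevant inequality is preserved both during smooth sliding and across the discrete merge events. These properties then extend to all measurable sets by the density argument. Property (7) follows from (3) via the multilinear Lipschitz estimate
\begin{equation*}
|\scriptt_\scriptl(\bE) - \scriptt_\scriptl(\bE')| \le C \sum_{j \in J} |E_j \symdif E'_j|,
\end{equation*}
which in turn follows from the multilinearity of $\scriptt_\scriptl$ and the boundedness of the kernels $K_j$ of Definition~\ref{defn:K_j}.

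The principal technical point, and the main obstacle, is (8), the monotonicity of $s \mapsto \scriptt_\scriptl(\bE(s))$. I would prove it on the nice-set flow by differentiating $\scriptt_\scriptl(\bE(s))$ in $s$ away from merge times. Writing $\scriptt_\scriptl(\bE) = \int_{E_j} \tilde K_{j,\bE}$, where
\begin{equation*}
\tilde K_{j,\bE}(y) = \int_{\reals^m} \delta(L_j\bx - y) \prod_{i\neq j}\one_{E_i}(L_i \bx)\,d\bx
\end{equation*}
is the sliced kernel of $\scriptt_\scriptl$ relative to the \emph{current} configuration of the other sets, the derivative becomes a sum of boundary contributions indexed by the endpoints of the components of each $E_j(s)$. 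Because $\tilde K_{j,\bE(s)}$ is not generally symmetric about $0$ at intermediate times, a term-by-term sign analysis does not suffice; instead one must show that the global sum is nonnegative. This is the infinitesimal version of the Brascamp--Lieb--Luttinger inequality in one dimension. I would establish it by recognizing the flow as a continuous-time limit of iterated polarizations at the origin, each of which does not decrease $\scriptt_\scriptl$ by the classical argument of \cite{BLL}. Merge events form a discrete subset of times and contribute nothing by (3). Promoting this infinitesimal monotonicity to the desired continuous monotonicity along the full flow is the main technical step.
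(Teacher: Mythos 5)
There is a genuine gap, and it is in the construction itself, not merely in the unfinished verification of property (8). Your dynamics --- each connected component translating at \emph{unit speed} toward the origin, freezing when its center reaches $0$ --- is not the classical Brascamp--Lieb--Luttinger device, and the functional $\scriptt_\scriptl$ can \emph{strictly decrease} along it. To see why, suppose each $E_j$ is a single interval with center $c_j$; then $\scriptt_\scriptl(\bE(s))=\Psi(\bc(s))$, where $\Psi(\bv)=|K(\bv)|$ is the even, log-concave function of \S5. Evenness plus log-concavity make $\Psi$ nonincreasing along rays through $\bzero$ (equivalently, through the symmetry subspace $\bL(\reals^m)$), but not along arbitrary paths, and your path $c_j(s)=c_j-s\operatorname{sgn}(c_j)$ is not a ray because different components reach $0$ at different times. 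Concretely, take $m=2$, $L_1(x)=x_1$, $L_2(x)=x_2$, $L_3(x)=x_1-5x_2$, with $E_1,E_2,E_3$ intervals of half-lengths $1,1,6$ centered at $10,1,0$. Here $\Psi$ depends only on the invariant $u=v_3-v_1+5v_2$, and $\Psi=g(u)$ with $g$ even and nonincreasing in $|u|$; at the initial configuration $u=-5$, while along your flow $\dot u=+1-5=-4$, so $|u|$ increases and $\scriptt_\scriptl$ strictly decreases (a direct area computation confirms this). The correct ``well known device,'' which the paper simply cites from \cite{BLL}, \cite{liebloss}, and \cite{christRSult}, scales \emph{all} centers by a common factor $t\downarrow 0$ between merge events; then $\bc(t)=t\bc$ does move along a ray and monotonicity is exactly the BLL lemma.

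Two secondary points. First, even granting a correct flow on finite unions of intervals, your route to (8) via ``a continuous-time limit of iterated polarizations at the origin'' conflates two different techniques: polarization is not the argument of \cite{BLL}, and the claim that each polarization does not decrease $\scriptt_\scriptl$ for a general nondegenerate $\scriptl$ would itself need proof; the paper avoids this entirely by quoting the interval-case monotonicity from \cite{BLL} and passing to general sets by contractivity and the Lipschitz bound (your density argument for this last step is fine, and your proof of (7) matches the paper's Lemma~\ref{lemma:continuity}, except that the needed bound is $\Phi_\scriptl(\bff)\le C\|f_j\|_1\prod_{i\ne j}\min(\|f_i\|_1,\|f_i\|_\infty)$ with the \emph{other} sets arbitrary, not boundedness of the kernels $K_j$ of Definition~\ref{defn:K_j}, which are defined with the other sets equal to $E_i^\star$). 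Second, note the paper's remark that the full Proposition is dispensable: the argument only ever needs the flow on finite unions of intervals, so the density extension, while correct in outline, is not where the substance lies.
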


Here $\bE(t)$ denotes $(E_j(t): j\in J)$.

All of these statements are to be interpreted in terms of equivalence classes
of measurable sets, with $E$ equivalent to $E'$ whenever $|e\symdif e'|=0$.
Thus $E(t)\subset\tilde E(t)$ means that $\tilde E(t)\setminus E(t)$ is a Lebesgue null set.
In the case in which the initial set $E$ is a finite union of pairwise disjoint closed intervals,
this flow is a well known device \cite{BLL}, \cite{liebloss}. 
Except for the functional continuity and monotonicity conclusions, Proposition~\ref{prop:flow}
is proved in \cite{christRSult}. Functional monotonicity follows from 
contractivity and inclusion monotonicity, together with the functional
monotonicity for finite unions of intervals established by Brascamp, Lieb, and Luttinger
\cite{BLL}, in exactly the same way that the corresponding functional monotonicity was
established in \cite{christRSult}.
Functional continuity is a consequence of the next lemma.
\qed

\begin{lemma} \label{lemma:continuity}
If $\scriptl$ is nondegenerate then
there exist exponents $\gamma_j\in(0,1]$ and $C<\infty$ such that for every $J$-tuple $\bE$
of Lebesgue measurable sets,
\begin{equation} \scriptt_\scriptl(\bE) \le C\prod_{j\in J} |E_j|^{\gamma_j}. \end{equation}
\end{lemma}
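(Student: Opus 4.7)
The plan is to bound $\scriptt_\scriptl(\bE)$ by changing coordinates so that a basis drawn from $\scriptl$ becomes the standard coordinates of $\reals^m$, and then to interpolate geometrically across enough such coordinate changes to produce a positive exponent on every $|E_j|$.

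The first step is to harvest enough bases from $\scriptl$. Condition (iii) of nondegeneracy gives $\cap_{i\in J}\kernel(L_i)=\{0\}$, so $\scriptl$ spans the dual $\reals^{m*}$. For each $k\in J$, $L_k\ne 0$ by surjectivity (i), so $\{L_k\}$ is linearly independent; the exchange lemma extends it to a basis of $\reals^{m*}$ chosen from $\scriptl$, namely $\{L_j : j\in B_k\}$ with $k\in B_k\subset J$ and $|B_k|=m$. The key feature built in at this step is that $k$ itself belongs to $B_k$.

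Next, I would derive a Loomis--Whitney-style estimate for each such basis. For fixed $k$, the map $x\mapsto y=(L_j(x))_{j\in B_k}$ is a linear bijection of $\reals^m$ with nonzero Jacobian $\Delta_k$. After this change of variables, each factor $\one_{E_j}\circ L_j$ with $j\in B_k$ becomes $\one_{E_j}$ applied to a single coordinate of $y$, while the remaining factors with $i\in J\setminus B_k$ are bounded pointwise by $1$. Fubini then yields
\begin{equation*}
\scriptt_\scriptl(\bE)\;\le\;|\Delta_k|^{-1}\prod_{j\in B_k}|E_j|.
\end{equation*}

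The final step is to take the geometric mean of these $|J|$ inequalities over $k\in J$, producing
\begin{equation*}
\scriptt_\scriptl(\bE)\;\le\;C\prod_{j\in J}|E_j|^{\gamma_j},\qquad \gamma_j=\frac{|\{k\in J:\,j\in B_k\}|}{|J|},
\end{equation*}
with $C=\prod_k|\Delta_k|^{-1/|J|}$. Both exponent bounds are automatic: since $j\in B_j$ by construction, $\gamma_j\ge 1/|J|>0$; and trivially $\gamma_j\le 1$. There is really no significant obstacle in this proof; the one subtlety is the insistence that $k\in B_k$ for each $k$, which is precisely what forces every $\gamma_j$ to be strictly positive rather than merely nonnegative. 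An alternative and more slick route would be to invoke the finiteness criterion for Hölder--Brascamp--Lieb inequalities directly, but the bootstrap from the trivial coordinate-projection bound above is more self-contained and suffices for the qualitative statement required here.
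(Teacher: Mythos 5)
Your proposal is correct and follows essentially the same route as the paper: the trivial bound $\scriptt_\scriptl(\bE)\le C_{J'}\prod_{i\in J'}|E_i|$ for each basis $J'\subset J$ drawn from $\scriptl$, a choice of bases ensuring every index $j$ appears in at least one of them, and a geometric mean to produce strictly positive exponents. Your write-up merely makes explicit the change of variables and the exchange-lemma step that the paper leaves implicit.
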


\begin{proof}
If $J'\subset J$ has cardinality $m$
and $\{L_i: i\in J'\}$ is a basis for $(\reals^m)^*$
then there exists $C_{J'}<\infty$ such that 
\begin{equation} \label{eq:trivialmultibound}
\scriptt_\scriptl(\bE) \le C_{J'} \prod_{i\in J'} |E_i|
\ \text{ for all $\bE$.} \end{equation}

According to the nondegeneracy hypothesis,
$\{L_j: j\in J\}$ spans the dual space $(\reals^m)^*$,
and none of these vanish. Therefore for each $j\in J$ there exists
such a subset $J'\subset J$ that contains $j$ and forms a basis for $(\reals^m)^*$.
Thus there exists a finite collection of subsets $J'\subset J$,
satisfying \eqref{eq:trivialmultibound},
such that each $j\in J$ belongs to at least one of these. 
In the geometric mean of the right-hand sides of all associated inequalities
\eqref{eq:trivialmultibound}, $|E_j|$ is raised to a positive power for each index $j$. 
Thus we arrive at the conclusion of the lemma.
\end{proof}

We record a related fact that will be used below.

\begin{lemma} \label{lemma:addon}
Let $i\ne j\in\scriptl$.  There exists $C<\infty$ such that
for all functions $f_n\in L^1\cap L^\infty$,
\begin{equation}
\big|\Phi_\scriptl(\bff)\big| \le C\norm{f_i}_1\norm{f_j}_1\prod_{k\ne i,j} \min(\norm{f_k}_1,\norm{f_k}_\infty).
\end{equation}
\end{lemma}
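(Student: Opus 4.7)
The plan is to refine the change-of-variables argument of Lemma~\ref{lemma:continuity}. The starting observation is that whenever $J'\subset J$ has cardinality $m$ and $\{L_k:k\in J'\}$ forms a basis of $(\reals^m)^*$, substituting $y=(L_k(x))_{k\in J'}$ in the integral defining $\Phi_\scriptl(\bff)$ and bounding each remaining factor pointwise by $\norm{f_k}_\infty$ yields
\[
\Phi_\scriptl(\bff)\le C_{J'}\prod_{k\in J'}\norm{f_k}_1\prod_{k\notin J'}\norm{f_k}_\infty.
\]

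By condition (ii) of Definition~\ref{defn:nondegenerate}, $\{L_i,L_j\}$ is linearly independent; by condition (iii), it can be completed to a basis of $(\reals^m)^*$ using $m-2$ additional indices from $J\setminus\{i,j\}$, and typically in many ways. Each such completion $J'=\{i,j\}\cup T$ with $|T|=m-2$ produces
\[
\Phi_\scriptl(\bff)\le C_{J'}\norm{f_i}_1\norm{f_j}_1\prod_{k\in T}\norm{f_k}_1\prod_{k\in J\setminus J'}\norm{f_k}_\infty,
\]
which already exhibits the factors $\norm{f_i}_1\norm{f_j}_1$ required by the conclusion.

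To reach the full product of minima $\prod_{k\ne i,j}\min(\norm{f_k}_1,\norm{f_k}_\infty)$, I would choose the completion $T$ adaptively in terms of $\bff$: partition $J\setminus\{i,j\}=S_1\sqcup S_\infty$ according to whether $\norm{f_k}_1\le\norm{f_k}_\infty$ or not, and select $T\subset S_1$ so that $\norm{f_k}_1=\min(\norm{f_k}_1,\norm{f_k}_\infty)$ for every $k\in T$ while $\norm{f_k}_\infty=\min(\norm{f_k}_1,\norm{f_k}_\infty)$ for every $k\in J\setminus J'$. The main obstacle is to produce such a basis completion $T\subset S_1$; this is a matroid-exchange question in the dual space, which I would resolve by a Steinitz-type argument starting from an arbitrary valid completion and iteratively swapping elements of $T\cap S_\infty$ for elements of $S_1$, relying on condition (iii) of nondegeneracy (in the spirit of Lemma~\ref{lemma:independence}) to preserve the basis property at each swap. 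Taking the minimum over the resulting finite collection of inequalities then gives the asserted bound, with the constant $C$ depending only on $\scriptl$ and on $i,j$.
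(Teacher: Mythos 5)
Your first paragraph reproduces, almost verbatim, the paper's entire proof: by nondegeneracy conditions (ii) and (iii) one fixes a single linearly independent set $J'\supset\{i,j\}$ of cardinality $m$, changes variables, and obtains $|\Phi_\scriptl(\bff)|\le C_{J'}\prod_{k\in J'}\norm{f_k}_1\prod_{k\notin J'}\norm{f_k}_\infty$. The paper stops there; it does not attempt the product of minima, and that is where your argument develops a genuine gap.

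The adaptive selection of the completion $T\subset S_1$ cannot be carried out in general. The set $S_1$ is determined by $\bff$, not by $\scriptl$: it may be empty, or the functionals $\{L_k:k\in S_1\}$ may all lie in a proper subspace of $(\reals^m)^*$, in which case no Steinitz exchange can produce $m-2$ elements of $S_1$ completing $\{L_i,L_j\}$ to a basis --- exchange arguments relocate linearly independent vectors, they do not create them. Moreover the failure is not merely one of method: the inequality with the full product of minima is false as stated. Take $m=3$, $J=\{1,2,3,4\}$, $L_1(x)=x_1$, $L_2(x)=x_2$, $L_3(x)=x_3$, $L_4(x)=x_1+x_2+x_3$ (a nondegenerate family), $i=1$, $j=2$, $f_1=f_2=\one_{[0,1]}$, and $f_3=f_4=\one_{[0,N]}$ with $N$ large. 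Then $\min(\norm{f_k}_1,\norm{f_k}_\infty)=1$ for $k=3,4$, so the asserted right-hand side is a constant, while $\Phi_\scriptl(\bff)=\int_0^1\int_0^1\max(0,N-x_1-x_2)\,dx_1\,dx_2\ge N-2\to\infty$. What is provable --- and what the paper actually proves and uses, e.g.\ in Lemma~\ref{lemma:daggercomparison}, where every factor is uniformly bounded in both norms so the distinction is harmless --- is the bound with a fixed assignment of norms: $L^1$ on the $m$ indices of one chosen basis $J'\ni i,j$ and $L^\infty$ on the complement (equivalently, the stated bound with $\min$ replaced by $\max$). You should stop after your second display; the final step of your plan cannot be repaired.
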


\begin{proof}
By hypothesis, $L_i$ and $L_j$ are not colinear, hence are linearly independent.
Hence there exists a linearly independent subset $J'\subset J$ of cardinality $m$
that contains both $i$ and $j$.
Then
\[ \Phi(\bff) \le C \prod_{k\in J'} \norm{f_k}_1 \cdot \prod_{n\in J\setminus J'} \norm{f_n}_\infty.\]
\end{proof}

Proposition~\ref{prop:flow} is not genuinely needed in our proofs;
it suffices to prove Theorem~\ref{thm:main2} for sets that are finite unions of intervals.
The flow for those sets was constructed in \cite{BLL}, and is all that our method requires
to analyze them.
That such a flow could be extended to general measurable sets seems to have been known
\cite{burchardoral},\cite{burchardlecturenotes}, 
though perhaps not widely documented in the literature.

\section{Analysis for intervals}

In this section we analyze the situation in which the sets $E_j$ are all intervals.
Let $m\ge 2$, let $J$ be a finite index set of cardinality $|J|>m$,
and for each $j\in J$ let $L_j:\reals^m\to\reals^{1}$ be a surjective linear mapping.
Let $I_j$ be closed intervals in $\reals$ 
centered at $0$, 
of finite, positive lengths $|I_j|=e_j$.
For $\bv\in\reals^J$ define
\begin{equation} \label{Psidefn}
\Psi(\bv) = \Phi_\scriptl(I_j+v_j: j\in J) =  \int_{\reals^m} \prod_{j\in J} \one_{I_j+v_j}(L_j(\bx))\,d\bx.
\end{equation}
By the Brascamp-Lieb-Luttinger inequality \eqref{eq:BLL}, $\Psi(\bv)\le\Psi(\bzero)$ for all $\bv$.
Alternatively, this is a consequence of the Brunn-Minkowski inequality; see below.
A sufficient condition for equality is that there exist
$\by\in\reals^m$ satisfying $L_j(\by)=v_j$ for all $j\in J$,
for the substitution $\bx\mapsto \bx-\by$ reduces $\Psi(\bv)$ to $\Psi(\bzero)$.
These are $|J|$ linear equations in $m<|J|$ variables. 



To prepare for the proof of Proposition~\ref{prop:weakforintervals},
define the convex set $K\subset\reals^m\times\reals^J$ by
\begin{equation} K = \{(\bx,\bu)\in\reals^m\times\reals^J: 
L_j(\bx)\in I_j+u_j \ \text{ for all $j\in J$.} \end{equation}
Equivalently, $|L_j(\bx)-u_j|\le \tfrac12|I_j|$.
For $\bu\in \reals^J$ define
\begin{equation} K(\bu)=\{ \bx\in\reals^m: (\bx,\bu)\in K\}. \end{equation}
$\Psi(\bu)$ represents the $m$--dimensional volume $|K(\bu)|$ of $K(\bu)$.
$|K(-\bu)|\equiv|K(\bu)|$, by the change of variables $\bx\mapsto -\bx$ in $\reals^m$,
since $I_j$ is centered at the origin.
Since $K$ is convex,
\begin{equation} \label{containment}
K(\bzero)\supset \tfrac12 K(-\bv) + \tfrac12K(\bv) \qquad\forall\,\bv\in \reals^J.  \end{equation}
Therefore by the Brunn-Minkowski inequality,
\begin{equation} \label{eq:KBM} |K(\bzero)| \ge |K(\bv)|^{1/2} |K(-\bv)|^{1/2} = |K(\bv)|.\end{equation}

\begin{proof}[Proof of Proposition~\ref{prop:weakforintervals}]
Suppose that $\bv\in\reals^J$ satisfies
$\Psi(\bv)=\Psi(\bzero)$.
Then since $|K(-\bv)| = |K(\bv)|$, $|K(\bzero)| = |K(\bv)| = |K(\bv)|^{1/2}|K(-\bv)|^{1/2}$,
and hence by \eqref{eq:KBM} and the arithmetic-geometric mean inequality,
\begin{equation} |K(\bzero)| \ge \big|\tfrac12 K(-\bv)+\tfrac12 K(\bv)\big| \ge 
|K(\bv)|^{1/2}|K(-\bv)|^{1/2} = |K(\bzero)|. \end{equation}
Thus
\begin{equation} \label{equalityholds} 
\big|\tfrac12 K(-\bv)+\tfrac12 K(\bv)\big| = 
|K(\bv)|^{1/2}|K(-\bv)|^{1/2}
= |K(\bv)| = |K(\bzero)|. \end{equation}
According to the well-known characterization of cases of equality in the Brunn-Minkowski inequality,
the three sets $K(\bv)$, $K(-\bv)$, and $K(\bzero)$ must be translates of one another.
So there exists $\by\in\reals^m$ such that $K(\bv) + \by = K(\bzero)$. 

We claim that $v_j= L_j( - \by)$ for every index $j\in J$.
Indeed, the relation $K(\bv)=K(\bzero)-\by$ means that for every $j$,
for any $\bx\in\reals^m$,
\[ L_j(\bx+\by)\in I_j \Leftrightarrow L_j(\bx)\in I_j+v_j. \]
Thus by substituting $\bx=\bz-\by$ we find that for any $\bz\in\reals^m$,
\begin{equation}\label{implication} \big[|L_j(\bz)| \le \tfrac12|I_j|\ \forall\,j\in J\big]
\Longrightarrow
\big[|L_j(\bz) -v_j-L_j(\by)|\le \tfrac12|I_j|\ \forall\, j\in J \big]. \end{equation}

Let $k\in J$.
By the admissibility hypothesis,
there exists $\bx\in\reals^m$ such that $|L_j(\bx)|\le \tfrac12 |I_j|$
for every $j\in J$, and $L_k(\bx)  = |I_k|/2$.  According to \eqref{implication}
applied both with $\bz=\bx$ and with $\bz = -\bx$,
$|L_k(\bx)-v_k-L_k(\by)|\le|I_k|/2$ 
and
$|L_k(-\bx)-v_k-L_k(\by)|\le|I_k|/2$. 
If $v_k+L_k(\by)<0$ then 
\[ L_k(\bx)-v_k-L_k(\by) = \tfrac12|I_k|-(v_k+L_k(\by)) > \tfrac12|I_k|,\]
contradicting \eqref{implication} for $\bx$.
In the same way, if $v_k+L_k(\by)>0$ then a contradiction is reached for $-\bx$.
Therefore $v_k=-L_k(\by) = L_k(-\by)$. 
\end{proof}

The next result is the main goal of this section.
Only in its proof is the genericity hypothesis invoked.

\begin{proposition} \label{prop:intervals} 
Let $d = 1$.
Let $m,J,\scriptl,\scriptt_\scriptl,S$ satisfy the hypotheses of Theorem~\ref{thm:main2}.
There exists $c>0$
such that for every 
$\be\in S$ and every
$J$-tuple $\bI$ of intervals $I_j\subset\reals$
satisfying $|I_j|=e_j$ for each $j\in J$,
\begin{equation}
\scriptt_\scriptl(\bI)\le \scriptt_\scriptl(\bI^\star) - c\dist(\bI,\scripto(\bI^\star))^2.
\end{equation}
\end{proposition}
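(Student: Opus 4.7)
The plan is to parametrize the tuple $\bI$ by the centers $\bv=(v_j)_{j\in J}\in\reals^J$ of the intervals $I_j=I_j^\star+v_j$ and to study $\Psi(\bv):=\scriptt_\scriptl(\bI)=|K(\bv)|$, where $K(\bv)\subset\reals^m$ is the convex polytope introduced in Proposition~\ref{prop:weakforintervals}. The translation invariance of $\scriptt_\scriptl$ makes $\Psi$ constant along cosets of the $m$-dimensional subspace $\Lambda(\reals^m)\subset\reals^J$, where $\Lambda(\by):=(L_j(\by))_{j\in J}$ is injective by nondegeneracy, so $\Psi$ descends to the orthogonal complement $W:=\Lambda(\reals^m)^{\perp}$. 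A direct comparison shows that $\dist(\bI,\scripto(\bI^\star))$ is comparable to the Euclidean norm $|P_W\bv|$ of the projection of $\bv$ onto $W$, with constants uniform over $\be\in S$. The target estimate therefore becomes $\Psi(\bv)\le\Psi(\bzero)-c|P_W\bv|^2$ with a uniform $c>0$.

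A standard compactness reduction brings the problem down to a local estimate near $\bzero$. Proposition~\ref{prop:weakforintervals} gives $\Psi(\bv)<\Psi(\bzero)$ whenever $P_W\bv\ne\bzero$, and continuity of $\Psi$ in $(\be,\bv)$ together with the compactness of $S$ yields a uniform strict gap $\Psi(\bzero)-\Psi(\bv)\ge c'>0$ on the annular set $\delta_0\le|P_W\bv|\le R$; moreover $\Psi(\bv)\equiv 0$ whenever $|P_W\bv|$ exceeds a uniform constant $R$. Thus it suffices to prove the quadratic estimate on the neighborhood $|P_W\bv|\le\delta_0$.

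For the local estimate the genericity hypothesis enters through Lemma~\ref{lemma:independence}: every vertex of $K(\bzero)$ is the simple intersection of exactly $m$ linearly independent facet-hyperplanes $L_j(\bx)=\pm e_j/2$, so for $\bv$ in a small neighborhood of $\bzero$ the combinatorial type of $K(\bv)$ is preserved and each vertex depends linearly on $\bv$. Consequently $\Psi(\bv)=|K(\bv)|$ is a polynomial of degree at most $m$ on this neighborhood. The symmetry $\bx\mapsto-\bx$ forces $\Psi(-\bv)=\Psi(\bv)$ and in particular $\nabla\Psi(\bzero)=0$. A coarea computation then identifies the Hessian $H$: the diagonal entries $H_{jj}=2D^{-}K_j(e_j/2)$ are strictly negative by strict admissibility, and the off-diagonal entries $H_{ij}$ are expressed as differences of normalized $(m-2)$-volumes of the edges where pairs of facets of $K(\bzero)$ meet. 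Brunn--Minkowski, as used in Proposition~\ref{prop:weakforintervals}, gives that $H$ is negative semidefinite with $\Lambda(\reals^m)\subset\ker H$.

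The principal obstacle is to upgrade this to strict negative definiteness of $H$ on $W$. I would argue by contradiction: assume $\bw\in W\setminus\{\bzero\}$ satisfies $\bw^{\top}H\bw=0$. By the evenness of $\Psi(t\bw)$ in $t$, one then has $\Psi(\bzero)-\Psi(t\bw)=O(t^4)$ as $t\to 0$. On the other hand, the inclusion $\tfrac12 K(t\bw)+\tfrac12 K(-t\bw)\subset K(\bzero)$, the equality $|K(t\bw)|=|K(-t\bw)|$ (from $\bx\mapsto-\bx$), and the Figalli--Maggi--Pratelli quantitative Brunn--Minkowski inequality for convex bodies together yield $\Psi(\bzero)-\Psi(t\bw)\ge c\,\alpha(K(t\bw),K(-t\bw))^2$, where $\alpha$ denotes the translate-asymmetry. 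An explicit polytope computation, exploiting that every facet area of $K(\bzero)$ is strictly positive by strict admissibility and that $\bw\notin\Lambda(\reals^m)$ makes the system $L_j(\bu)=2tw_j$ inconsistent modulo $\Lambda(\reals^m)$ of order $|t||\bw|$, gives $\alpha(K(t\bw),K(-t\bw))\ge c|t||\bw|$. Hence $\Psi(\bzero)-\Psi(t\bw)\ge c t^2$, contradicting the $O(t^4)$ bound. This establishes strict negative definiteness of $H$ on $W$ and, combined with the Taylor expansion and the compactness reduction, completes the proof.
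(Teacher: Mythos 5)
Your overall strategy is sound and the proof would go through, but it takes a genuinely different route from the paper's. The paper also reduces to the local problem $|K(\bw)|\le|K(\bzero)|-c\,\dist(\bw,\bL(\reals^m))^2$ for small $\bw$, but it extracts the quadratic gain from the \emph{edges} of the polytope: for each pair of adjacent vertices $\bp,\bq$ of the $1$-skeleton it cuts $K(\bw)$ by a halfspace isolating a neighborhood of the edge, applies plain Brunn--Minkowski to the far piece, and performs an explicit homothety/Jacobian computation on the near piece to get a gain $c|(\bp(\bw)-\bp(\bzero))-(\bq(\bw)-\bq(\bzero))|^2$; connectivity of the edge graph plus admissibility then identifies the nullspace of the resulting linear map with $\bL(\reals^m)$. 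Genericity (simplicity of the vertices) is essential there. You instead extract the gain from the \emph{facets}, by importing the Figalli--Maggi--Pratelli quantitative Brunn--Minkowski inequality and lower-bounding the translate-asymmetry of $K(t\bw)$ versus $K(-t\bw)$: for any candidate translation $\bx_0$ the facet-parameter vectors of $K(t\bw)$ and $\bx_0+K(-t\bw)=K(\bL(\bx_0)-t\bw)$ differ by $2t\bw-\bL(\bx_0)$, whose component orthogonal to $\bL(\reals^m)$ has norm $2|t||\bw|$, and since every facet of $K(\bzero)$ has strictly positive $(m-1)$-measure by strict admissibility, a displaced facet forces a symmetric difference $\gtrsim|t||\bw|$. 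This step, which you only sketch, is correct and is the linchpin of your argument; it deserves to be written out (including the regime where $\bx_0$ is not small, and uniformity over $\be\in S$). What each approach buys: the paper's argument is elementary and self-contained but longer and tied to genericity; yours is shorter but leans on a deep external theorem --- and, notably, once the FMP step is in place it yields $\Psi(\bzero)-\Psi(t\bw)\ge c\,t^2|\bw|^2$ \emph{directly}, so your entire Hessian/polynomial-volume discussion and the contradiction scaffolding are redundant (and with them the only place you invoke genericity in this proposition). You could streamline by deleting that material, or keep it and note that the FMP inequality is doing all the quantitative work.
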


To each extreme point $\bp$ of $\scriptk_\be = K(\bzero) \subset\reals^m$ we associate
$J_\bp$, the set of all $j\in J$ such that $|L_j(\bp)| = e_j/2$.
$\{L_j: j\in J_\bp\}$ must span $(\reals^m)^*$;
otherwise $\bp$ could not be an extreme point.
The genericity hypothesis states that every $J_\bp$ has cardinality equal to $m$,
so $J_\bp$ must be linearly independent.

$\scriptk_\be$ is a compact convex polytope. 
Define $\fg$ to be the (undirected) graph whose vertices are
the extreme points of $\scriptk_\be$, 
and whose edges are the line segments of the $1$-skeleton of this polytope.
If an extreme point $\bp$ is generic in the sense defined above,
then $m$ segments of the $1$-skeleton contain $\bp$, and these 
are contained in the translates by $\bp$
of the lines defined by intersections of nullspaces of $m-1$ elements of $J_\bp$. 
Two distinct extreme points $\bp,\bq$ are adjacent in this graph if and only if either 
$J_\bp=J_\bq$ and $L_j(\bp)=L_j(\bq)$ for exactly $m-1$ indices $j\in J_\bp$,
or
$J_\bp\cap J_{\bq}$ has cardinality equal to $m-1$
and $L_j(\bp)=L_j(\bq)$ for every $j\in J_\bp\cap J_\bq$.
In the latter situation,
$\bigcap_{j\in J_\bp\cap J_\bq} \kernel(L_j)$ has dimension equal to $1$, 
and $|L_k(\bp)|\ne |L_k(\bq)|$ if 
$k\in L_\bp\symdif L_\bq$.


If 
$\{ L_j: j\in J'\}\subset\{L_j: j\in J\}$
spans $(\reals^m)^*$,
and if $\bx,\by\in\reals^m$ satisfy $L_j(\bx)=L_j(\by)$
for every $j\in J'$, then $\bx=\by$. Thus for distinct extreme points $\bp,\bq$, 
it is not possible to have $L_j(\bp)=L_j(\bq)$ for every $j\in L_\bp$.

\begin{lemma}
The graph $\fg$ is connected.
\end{lemma}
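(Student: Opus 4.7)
The plan is to adapt a standard polytope-theoretic argument (in the spirit of the simplex method, and at the heart of Balinski's theorem on the connectivity of polytope graphs): select a generic linear functional and show that from any extreme point one can always follow an edge of $\scriptk_\be$ so as to strictly increase the functional, until the unique maximizer is reached.

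Specifically, I would first choose a linear functional $\ell:\reals^m\to\reals$ taking pairwise distinct values on the finite vertex set of $\scriptk_\be$; the set of such $\ell$ is the complement in $(\reals^m)^*$ of a finite union of hyperplanes, hence nonempty. Let $\bp_0$ be the unique extreme point maximizing $\ell$ on $\scriptk_\be$. The core claim is: for every extreme point $\bp\ne\bp_0$ there exists an extreme point $\bq$ adjacent to $\bp$ in $\fg$ with $\ell(\bq)>\ell(\bp)$. Granting this, starting from any extreme point one iterates to obtain a strictly $\ell$-increasing walk in $\fg$; since $\scriptk_\be$ has finitely many vertices the walk must terminate, and it can only terminate at $\bp_0$. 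Thus every vertex is connected in $\fg$ to $\bp_0$, so $\fg$ is connected.

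To establish the core claim, I would analyze the tangent cone
\[ T_\bp = \{\bv\in\reals^m : \sigma_j L_j(\bv)\le 0 \text{ for all } j\in J_\bp\},\qquad \sigma_j=\operatorname{sgn}(L_j(\bp)). \]
By genericity $|J_\bp|=m$, and Lemma~\ref{lemma:independence} ensures the $m$ functionals $\{L_j\}_{j\in J_\bp}$ are linearly independent. Consequently $T_\bp$ is a full-dimensional simplicial cone with exactly $m$ extremal rays spanned by vectors $\bv_1,\dots,\bv_m$, where $\bv_i$ is determined up to positive scaling by $L_j(\bv_i)=0$ for $j\in J_\bp\setminus\{j_i\}$ together with $\sigma_{j_i}L_{j_i}(\bv_i)<0$. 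Along the ray $\bp+t\bv_i$, $t>0$, the constraints indexed by $J_\bp\setminus\{j_i\}$ remain active, the $j_i$-th constraint becomes strict, and no constraint indexed outside $J_\bp$ is initially tight; since $\scriptk_\be$ is compact, the ray leaves $\scriptk_\be$ at some first time $t_i>0$, at which some constraint $|L_{k_i}|\le e_{k_i}/2$ with $k_i\notin J_\bp$ becomes tight. The resulting vertex $\bq_i=\bp+t_i\bv_i$ satisfies $J_{\bq_i}\supseteq (J_\bp\setminus\{j_i\})\cup\{k_i\}$, and genericity forces equality, giving $|J_\bp\cap J_{\bq_i}|=m-1$ with $L_j(\bp)=L_j(\bq_i)$ on this intersection, so by the adjacency criterion recalled in the paragraph just preceding the lemma, $\bp$ and $\bq_i$ are adjacent in $\fg$. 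Finally, since $\bp\ne\bp_0$ there exists $\bq'\in\scriptk_\be$ with $\ell(\bq')>\ell(\bp)$; writing $\bq'-\bp=\sum_i s_i\bv_i$ with $s_i\ge 0$ (possible because $\bq'-\bp\in T_\bp$ and $T_\bp$ is generated by the $\bv_i$) forces $\ell(\bv_i)>0$ for some $i$, and this $i$ satisfies $\ell(\bq_i)>\ell(\bp)$.

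I do not expect a serious obstacle: the only point requiring care is the identification of extremal rays of $T_\bp$ with edges of $\scriptk_\be$ emanating from $\bp$, which is essentially bookkeeping and is made transparent by genericity. Compactness of $\scriptk_\be$, recorded in the paper just after Definition~\ref{defn:strictlyadmissible}, guarantees that each such ray actually reaches a new extreme point rather than escaping to infinity.
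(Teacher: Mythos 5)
Your argument is correct, but it is a genuinely different route from the paper's. The paper disposes of the lemma in two sentences by a deformation argument: the segment joining two extreme points lies in $\scriptk_\be$, and can be pushed into faces of progressively lower dimension until it lies in the $1$-skeleton; this is a general fact about compact convex polytopes and does not use genericity. You instead give the Balinski/simplex-pivot argument: a generic linear functional, a strictly increasing edge walk, and an explicit description of the edges at each vertex via the simplicial tangent cone. Your proof is longer but more constructive and self-contained, and it is made clean precisely by the genericity hypothesis (which guarantees the polytope is simple, so the tangent cone at each vertex has exactly $m$ extremal rays and each ray is an edge); the paper's proof is shorter but leaves the deformation step as a sketch. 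One small inaccuracy in your write-up: when you follow the ray $\bp+t\bv_i$ until it exits $\scriptk_\be$, the newly tight constraint need not have index $k_i\notin J_\bp$ — it can be the constraint $|L_{j_i}|\le e_{j_i}/2$ becoming tight on the \emph{opposite} side, so that $J_{\bq_i}=J_\bp$ with $L_{j}(\bp)=L_j(\bq_i)$ for the $m-1$ indices $j\in J_\bp\setminus\{j_i\}$. This is exactly the first clause of the adjacency criterion stated just before the lemma, so $\bp$ and $\bq_i$ are still adjacent and the argument goes through unchanged; you should just record both cases.
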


\begin{proof}
Given any
two extreme points, the line segment joining them lies in the convex set $\scriptk_\be$. 
Viewed as a piecewise affine path, this segment can be continuously deformed within $\scriptk_\be$ to lie
in progressively lower-dimensional faces until it lies in the $1$-skeleton.
\end{proof} 

If $\bw\in\reals^J$ and $|\bw|$ is sufficiently small then as a consequence of the genericity
hypothesis, the extreme points of $K(\bw)$ are in natural one-to-one correspondence
with the extreme points of $\scriptk_\be = K(\bzero)$, and each extreme
point of $K(\bw)$ remains close to a unique extreme point of $K(\bzero)$.
Each extreme point of $K(\bw)$ can thus be regarded as a continuous function $\bp(\bw)$ of $\bw$.

\begin{lemma} \label{lemma:adjacent}
Uniformly for all sufficiently small $\bw\in\reals^J$, 
for every pair of extreme points $\bp,\bq$ of $\scriptk_\be$ that are adjacent in $\fg$, 
\begin{equation}\label{eq:Kbwgain1} 
|K(\bw)| \le |K(\bzero)|-c|(\bp(\bw)-\bp(\bzero))-(\bq(\bw)-\bq(\bzero))|^2.  \end{equation}
\end{lemma}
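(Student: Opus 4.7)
The plan is to combine Brunn-Minkowski with a Taylor expansion of the map $\bw\mapsto|K(\bw)|$. Since each interval $I_j$ is centered at $0$, the polytope $K(\bzero)$ is centrally symmetric and in fact $K(-\bw)=-K(\bw)$, whence $|K(-\bw)|=|K(\bw)|$; convexity gives $\tfrac12 K(\bw)+\tfrac12 K(-\bw)\subseteq K(\bzero)$, so Brunn-Minkowski yields $|K(\bw)|\le|K(\bzero)|$. For $\bw$ sufficiently small, the combinatorial type of $K(\bw)$ is preserved, and by the genericity hypothesis $\{L_j:j\in J_\bp\}$ is a basis of $(\reals^m)^*$ at every extreme point $\bp$. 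Hence the displacement $\bu_\bp(\bw):=\bp(\bw)-\bp$ is the unique solution of the $m\times m$ linear system $L_j(\bu_\bp(\bw))=w_j$, $j\in J_\bp$, and depends linearly on $\bw$. Consequently $|K(\bw)|$ is a polynomial in $\bw$ of degree at most $m$, and the parity $|K(\bw)|=|K(-\bw)|$ eliminates the odd-degree terms, giving
\[ |K(\bw)|=|K(\bzero)|+Q(\bw)+O(|\bw|^4) \]
with $Q$ a negative semi-definite quadratic form.

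For adjacent $\bp,\bq$, consider the difference $\bu_\bp(\bw)-\bu_\bq(\bw)$. In the Case~2 adjacency (distinct active sets $J_\bp\ne J_\bq$ sharing $m-1$ constraints $J'$), the equation $L_j(\bu_\bp-\bu_\bq)=0$ for $j\in J'$ places $\bu_\bp-\bu_\bq$ in the one-dimensional edge direction $\cap_{j\in J'}\ker L_j$, and it is linear in $\bw$. In the Case~1 adjacency ($J_\bp=J_\bq$, one sign flipped), a direct check shows $\bu_\bp=\bu_\bq$, so the difference vanishes identically. Via the Taylor expansion, the lemma reduces, for small $\bw$, to the pointwise quadratic form inequality $-Q(\bw)\ge c\,|\bu_\bp(\bw)-\bu_\bq(\bw)|^2$ on $\reals^J$, with $c$ uniform in the adjacent pair. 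Both sides vanish on the translation subspace $T=\{(L_j(\bt))_{j\in J}:\bt\in\reals^m\}\subseteq\reals^J$, where $K(\bw)$ is a rigid translate of $K(\bzero)$ and all $\bu_\bp$ agree. By the connectedness of $\fg$ established just before the statement, the auxiliary quadratic form $\sum_{(\bp,\bq)\text{ adj.}}|\bu_\bp(\bw)-\bu_\bq(\bw)|^2$ is strictly positive on $\reals^J\setminus T$; hence, once the same is known for $-Q$, a compactness argument on the unit sphere of the finite-dimensional quotient $\reals^J/T$ delivers the uniform $c>0$.

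The main obstacle is the strict positivity of $-Q$ on $\reals^J\setminus T$; Brunn-Minkowski by itself yields only $Q\le 0$, with no a priori control on degeneracy. I would address this by invoking a quantitative Brunn-Minkowski stability bound of Figalli-Jerison type: for a convex body $A\subset\reals^m$ with $|A|=|{-A}|$ one has
\[ \bigl|\tfrac12 A+\tfrac12(-A)\bigr|-|A|\ \ge\ c_m\,|A|\,\delta(A)^2, \]
where $\delta(A)$ measures the (normalized) symmetric-difference distance of $A$ from a translate of $-A$. Applied with $A=K(\bw)$, this yields $|K(\bzero)|-|K(\bw)|\ge c\,\delta(K(\bw))^2$, and the task becomes to verify that for small $\bw$ the asymmetry $\delta(K(\bw))$ is comparable to the distance from $\bw$ to $T$ in $\reals^J/T$; this follows from the linear dependence $\bp(\bw)=\bp+\bu_\bp(\bw)$ together with connectedness of $\fg$, and in particular dominates each $|\bu_\bp(\bw)-\bu_\bq(\bw)|$ for adjacent pairs. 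Alternatively, a direct second-order computation of $Q$ via a triangulation of $K(\bzero)$ into simplices based at the origin, summing quadratic variations of individual simplex volumes and invoking the linear independence of $\{L_j:j\in J_\bp\}$ at each extreme point, produces an explicit quadratic form whose non-degeneracy modulo $T$ is manifest. Either route, combined with the compactness reduction, completes the proof.
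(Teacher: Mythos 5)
Your proposal splits into two routes, and they deserve different verdicts. Route (a) — soft Brunn--Minkowski replaced by the quantitative stability theorem of Figalli--Maggi--Pratelli applied to $A=K(\bw)$, $B=-A=K(-\bw)$ — is a genuinely different and, in principle, viable path. It bypasses the Taylor expansion and the quadratic form $Q$ entirely: FMP gives $|K(\bzero)|-|K(\bw)|\gtrsim \delta(K(\bw))^2$ directly, and since $\bp(\bw)-\bp(-\bw)=2(\bp(\bw)-\bp(\bzero))$ by linearity, the elementary bound $\max(|a-y|,|b-y|)\ge\tfrac12|a-b|$ reduces the lemma to a comparability statement of the form $\inf_{\by}|K(\bw)\symdif(K(-\bw)+\by)|\gtrsim\inf_{\by}\max_{\bp}|\bp(\bw)-\bp(-\bw)-\by|$ for polytopes of the fixed (generic, simple) combinatorial type; that lemma is true but must actually be written out (a vertex discrepancy of size $\rho$ forces a facet-level discrepancy $\gtrsim\rho$, hence a slab of volume $\gtrsim\rho$ in the symmetric difference, uniformly for $\be$ in the compact set $S$). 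The paper takes the opposite tack: it stays entirely self-contained, normalizes so that $\bp(\bw)=\bp$, localizes to a thin cap $H_t\cap K(\pm\bw)$ around the edge $[\bp,\bq]$ cut off at a fixed volume fraction, and proves the needed quadratic gain by an explicit homothety/Jacobian computation ($2^{-m}(1+s)^{m-1}(1+s^{-(m-1)})\ge 1+c(s-1)^2$ with $|s-1|\asymp|w_k|$). In effect the paper proves by hand exactly the one special case of quantitative Brunn--Minkowski it needs; your route buys brevity at the cost of a deep external theorem plus the unproved comparability lemma.

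Route (b) has a genuine gap. The assertion that a triangulation-based computation of the Hessian $Q$ of $\bw\mapsto|K(\bw)|$ "produces an explicit quadratic form whose non-degeneracy modulo $T$ is manifest" is precisely the content of the lemma (together with the connectivity argument in Proposition~\ref{prop:intervals}), not something that falls out of bookkeeping: classical Brunn--Minkowski yields only $Q\le 0$, and nothing a priori excludes a direction $\bw\notin T$ along which $Q(\bw)=0$ and the volume deficit is quartic. Negative-definiteness of the second variation of volume with respect to support numbers modulo translations is an Alexandrov-type statement, and it is exactly what the paper's cap construction establishes. Two smaller points: your Case 1 analysis ($J_\bp=J_\bq$, displacements equal) and the reduction to $\bw\in T^\perp$ are correct and match the paper; but if you do go through the expansion $|K(\bw)|=|K(\bzero)|+Q(\bw)+O(|\bw|^4)$, note that the quartic error can only be absorbed after restricting to $\reals^J/T$ and using positive-definiteness of $-Q$ there — comparing $-Q$ to $|\bu_\bp-\bu_\bq|^2$ pairwise, without that restriction, does not suffice when $|\bu_\bp(\bw)-\bu_\bq(\bw)|\ll|\bw|$.
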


If there exists $\bv\in\reals^m$ satisfying $L_j(\bv)=w_j$ for every $j\in J$
then $\bp(\bw)-\bp(\bzero)=\bv$ for every extreme point $\bp$ of $\scriptk_\be$,
so \eqref{eq:Kbwgain1} asserts mererly that $|K(\bw)|\le |K(\bzero)|$.
Indeed, $|K(\bw)| = |K(\bzero)|$ in that case.

\begin{proof}[Proof of Lemma~\ref{lemma:adjacent}]
Let $\bp\ne\bq$ be adjacent vertices of $\fg$.
Consider first the case in which $J_\bp=J_\bq$.
Let $\bw\in\reals^J$.
Then $L_j(\bp(\bw))= L_j(\bp(\bzero))+w_j$ for all $j\in J_\bp$, 
and likewise $L_j(\bq(\bw))= L_j(\bq(\bzero))+w_j$ for all $j\in J_\bq = J_\bp$. 
Consequently $L_j(\bp(\bw)-\bq(\bw))\equiv L_j(\bp(\bzero)-\bq(\bzero))$
for all $j\in J_\bp$. Therefore
since $\{L_j: j\in J_\bp\}$ spans the dual space $\reals^{m*}$,
$\bp(\bw)-\bq(\bw)=\bp(\bzero)-\bq(\bzero)$
The conclusion of the lemma then holds trivially, since
$|K(\bw)|\le |K(\bzero)|$
by the Brunn-Minkowski inequality. 

Consider next any pair $\bp,\bq$ of adjacent vertices for which
$J_\bp\cap J_\bq$ has cardinality $m-1$.
By translating in $\reals^m$ we may assume without loss of generality that $w_j=0$
for every $j\in J_\bp$. Then $\bp(\bw)=\bp(\bzero) = \bp$,
while $\bq(\bw)-\bq(\bzero) = (\bq(\bw)-\bp(\bw)) - (\bq(\bzero)-\bp(\bzero))$ 
is an element of the nullspace of $L_i$
for every $i\in J_\bp\cap J_\bq$. Thus $\bq(\bw)-\bq(\bzero))$
is an element of a  one-dimensional subspace that is independent of $\bw$.

By renaming indices, making a linear change of variables in $\reals^m$,
and replacing $L_j$ by $\pm 2e_j^{-1}L_j$ for all $\bx\in\reals^m$ for each $j\in  J_\bp$,
we may reduce matters to the situation in which
$J_\bp = \{1,2,\dots,m\}$, 
$J_\bq\cap J_\bp = \{1,2,\dots,m-1\}$,
$L_j(x) = x_j$ for every $x\in\reals^m$ for each $j\in J_\bp$,
$e_j=2$ for all $j\in J_\bp$,
$\bp = (-1,-1,\dots,-1)$,
and $\bq = (-1,-1,\dots,-1,a)$ for some $a>-1$.
Then there exists a neighborhood of the line segment joining $\bp$ to $\bq$
in which $|L_j|$ is strictly less than $e_j/2$
for every index $j\notin J_\bp\cup J_\bq$.
Indeed, suppose that $|L_j(z)|=e_j/2$ for some point $z$ of this segment.
$z$ cannot equal $\bp$ or $\bq$, since $j\notin J_\bp\cup J_\bq$.
$L_j$ cannot be constant in a neighborhood of $z$ on the segment, for then
it would be constant on the whole segment and hence $|L_j(\bp)|=|L_j(z)| = e_j/2$,
a contradiction. But if $L_j$ is not constant on the segment then
since $z$ is an interior point, $|L_j|$ attains a strictly larger
value at some other point of the segment,
contradicting the fact that $|L_j|\le e_j/2$ at every point of $K(\bzero)$.

With these choices and reductions, $\bp(\bw)\equiv\bp(\bzero)=\bp$, while the point
$\bq(\bw)$ takes the form $(-1,-1,\dots,-1,a(\bw))$,
for all sufficiently small $\bw\in\reals^J$.

Define $\bz$ to be the vector $\bz = (1,1,\dots,1,0)$.
Then $\bp+t\bz$ belongs to the boundary of $K(\bzero)$ 
for every sufficiently small $t>0$,
and $\bp+t\bz + (0,0,\dots,0,s)$ belongs to the interior of $K(\bzero)$
for all sufficiently small $t,s>0$.
For small positive $s\in\reals$ consider the halfspaces 
\begin{equation} \label{halfling}
H_s=\{\bx\in\reals^m: \langle \bx,\bz\rangle \le \langle \bp,\bz\rangle+s\}.\end{equation}
$H_s\cap K(\bzero)$ has positive Lebesgue measure for every $s>0$.
For small $s>0$, $H_s\cap K(\bzero)$ contains
a small neighborhood in $K(\bzero)$ of the line segment whose endpoints are $\bp,\bq$.
Conversely, any such neighborhood contains $H_s\cap K(\bzero)$ for all sufficiently
small $s>0$.
Choose and fix $s>0$ sufficiently small to ensure that
$|L_j|< e_j/2$ in a neighborhood of $H_s\cap K(\bzero)$
for every $j\notin J_\bp\cup J_\bq$.

Set
\[\theta = \frac{|H_s\cap K(\bzero)|}{|K(\bzero)|} \in (0,1).\]
For each sufficiently small vector $\bw\in\reals^m$ there exist unique $t=t(\bw),t'=t(-\bw)\in\reals^+$ 
satisfying $|H_t\cap K(\bw)| = \theta|K(\bw)|$
and likewise $|H_{t'}\cap K(-\bw)| = \theta |K(-\bw)|$. 
These parameters vary continuously with $\bw$, and satisfy $t=t'=s$ when $\bw=0$. 

Now $\tf K(\bw)+ \tf K(-\bw)$ contains the union of the two convex sets
$\tf(K(\bw)\cap H_t)+\tf(K(-\bw)\cap H_{t'})$
and
$\tf(K(\bw)\setminus H_t)+\tf(K(-\bw)\setminus H_{t'})$.
These two sets are disjoint except for their boundaries, so the measure
of their union equals the sum of their measures.

By the Brunn-Minkowski inequality,
\begin{align*} \big|\tf(K(\bw)\setminus H_t)+\tf(K(-\bw)\setminus H_{t'}) \big|
& \ge |K(\bw)\setminus H_t|^{1/2}|K(-\bw)\setminus H_{t'}|^{1/2}
\\ &
= (1-\theta)|K(\bw)|
\end{align*}
since $|K(-\bw)|=|K(\bw)|$.
If we can show that
\begin{multline} \label{eq:ifwecan}
|\tf(K(\bw)\cap H_t)+\tf(K(-\bw)\cap H_{t'})|
\\
\ge |K(\bw)\cap H_t|^{1/2}|K(-\bw)\cap H_{t'}|^{1/2}
+ c|\bq(\bw)-\bq(-\bw)|^2
\end{multline}
then since  the right-hand side is equal to $\theta|K(\bw)| + c|\bq(\bw)-\bq(-\bw)|^2$
by our choices of $t,t'$, we may conclude that
\[ |\tf K(\bw)+\tf K(-\bw)| \ge |K(\bw)| + c|\bq(\bw)-\bq(-\bw)|^2\]
and  consequently, since $\tf K(\bw)+\tf K(-\bw) \subset K(\bzero)$,
\begin{equation} \label{eq:Kbwmw} |K(\bw)| \le |K(\bzero)|-c|\bq(\bw)-\bq(-\bw)|^2.\end{equation}
Now 
\begin{equation} [\bq(\bw)-\bq(-\bw)] = 2[\bq(\bw)-\bq(\bzero)],\end{equation}
so \eqref{eq:Kbwmw} is equivalent to
\begin{equation}\label{eq:Kbwmz} |K(\bw)| \le |K(\bzero)|-c|\bq(\bw)-\bq(\bzero)|^2\end{equation}
with a different value of $c>0$.
We have normalized so that $\bp(\bw)-\bp(\bzero)=0$, so 
\eqref{eq:Kbwmz} is a restatement of the conclusion of Lemma~\ref{lemma:adjacent}.
\end{proof}

\begin{proof}[Proof of \eqref{eq:ifwecan}]
Let $J_\bq\setminus J_\bp = \{k\}$.
If $t>0$ is sufficiently small then
$K(\bw)\cap H_t$ is the set of all $\bx\in\reals^m$
that are close to the line segment with endpoints $\bp,\bq$ and satisfy
$x_j\ge -1$ for all $j\le m$,
$\sum_{j=1}^{m-1} x_j \le -(m-1) +t$,
and $|L_k(\bx)-w_k|\le e_k/2$.
Since $k\notin J_\bp$ and $\bp\in K(\bzero)$, $|L_k(\bp)|<e_k/2$. 
Therefore, after possibly replacing $L_k$ by $-L_k$, 
$K(\bw)\cap H_t$ is equal to the set of all $\bx\in\reals^m$ that satisfy
$\sum_{j=1}^{m-1} x_j < -(m-1)+t$,
$x_j\ge -1$ for all $j\le m$,  and $ L_k(\bx) \le \tf e_k + w_k$.
The function $\bx\mapsto L_k(\bx)$ cannot be independent of the final coordinate $x_m$, since 
$L_\bq = \{L_1,L_2,\dots,L_{m-1}\}\cup\{L_k\}$ spans $(\reals^m)^*$
and $L_j(\bx)\equiv x_j$ for $1\le j\le m-1$. 

Without loss of generality, we may multiply $L_k,e_k$ by constants
to put $L_k$ into the form $L_k(\bx) = x_m + \ell(\bx')$,
where $\ell:\reals^{m-1}\to\reals$ is linear and 
$\bx = (\bx',x_m)\in\reals^{m-1}\times\reals$.
Of the inequalities $|L_j(\bx)|\le e_j/2$ with $j\in J_\bp\cup J_\bq$,
only those with $j=m$ and $j=k$ involve the coordinate $x_m$.
These inequalities together take the form $-1\le x_m \le \tf e_k + w_k - \ell(\bx')$
in a neighborhood of the segment joining $\bq$ to $\bp$.
Thus for every point $\bx'$ sufficiently close to $(-1,-1,\dots,-1)\in\reals^{m-1}$,
$\{u\in\reals: (\bx',u)\in K(\bw)\cap H_t\}$
is a line segment of length 
\begin{equation}\label{eq:fwxprime} f_\bw(\bx') = \tf e_k +1-\ell(\bx') + w_k.\end{equation} 
The difference between this length and the length of the corresponding
line segment for $K(-\bw)$ is equal to $\pm 2w_k$, a quantity independent of $\bx'$.
In particular, $|\bq(\bw)-\bq(-\bw)| = 2|w_k|$.
Likewise, $|\bq(\bw)-\bq(\bzero)| = |w_k|$.
Moreover,
the difference between the ratio of these two lengths, and $1$, 
also has magnitude comparable to $|w_k|$.
Thus the conclusion \eqref{eq:Kbwgain1} of Lemma~\ref{lemma:adjacent} is equivalent to
\begin{equation}\label{eq:Kbwgain2} 
|K(\bw)| \le |K(\bzero)|-c|(\bp(\bw)-\bp(-\bw))-(\bq(\bw)-\bq(-\bw))|^2.  \end{equation}

Continuing to regard $\reals^m$ as $\reals^{m-1}\times\reals$,
define $K'(\bw)$ to be the projection onto $\reals^{m-1}$ of $K(\bw)\cap H_{t(\bw)}$.
Thus for all sufficiently small vectors $\bw$,
$K'(\bw)$ is the set of all $\bx'=(x_1,\dots,x_{m-1})\in\reals^{m-1}$ 
satisfying $x_j+1\ge 0$ for all $j\in\{1,2,\dots,m-1\}$ and $\sum_{j=1}^{m-1}(x_j+1)\le t(\bw)$.
For all $\bw$ sufficiently close to $\bzero$, $K'(-\bw)$ is homothetic to $K'(\bw)$. 
Let $\bp'=(-1,-1,\cdots,-1)$
be the projection of $\bp$ onto $\reals^{m-1}$.  Define
the homothety $\phi:K'(\bw)\to K'(-\bw)$ by $\phi(\bp'+\bx') = \bp'+s\bx'$
where $s=s(\bw) = t(-\bw)/t(\bw) \in\reals^+$ is chosen so that $\phi$ is a bijection.
Then
\begin{equation} C^{-1}|w_k|\le |s-1|\le C|w_k| \end{equation}
for some constant $C\in\reals^+$, uniformly in $\bw$ provided that $|\bw|$
is sufficiently small, by the observation concerning the ratio of
lengths made above.

Define $\tilde K(\bw) = K(\bw)\cap H_{t(\bw)}$ 
and $\tilde K(-\bw) = K(-\bw)\cap H_{t(-\bw)}$. 
We claim that there exists $c>0$, depending only on $m,\scriptl,\be$, such that
\begin{equation} \label{eq:convexgeometry}
\big|\tf \tilde K(\bw) + \tf \tilde K(-\bw)\big|
\ge (1+c|w_k|^2)  |\tilde K(\bw)|.
\end{equation}

To prove this claim, consider the one-dimensional Lebesgue measure
$f_\bw(\bx') = \tfrac12 e_k +1 - \ell(\bx') + w_k$ of the set of all $y\in\reals$
such that $(\bx',y)\in \tilde K(\bw)$.
The set $\tf \tilde K(\bw) + \tf \tilde K(-\bw)$
contains all points $\tf (\bx',u) + \tf (\phi(\bx'),v)$
such that $(\bx',u)\in \tilde K(\bw)$ and $(\phi(\bx'),v)\in \tilde K(-\bw)$.
Thus
$\tf \tilde K(\bw) + \tf \tilde K(-\bw)$
contains the set of all points $(\tf \bx' + \tf \phi(\bx'),\tf u + \tf v)$
where $\bx',u,v$ are as above.
Therefore 
the set of all $y\in\reals$ such that
$(\tf\bx' + \tf\phi(\bx'),y)$ belongs to 
$\tf \tilde K(\bw) + \tf \tilde K(-\bw)$
has one-dimensional Lebesgue measure greater than or equal to
$\tf f_\bw(\bx') + \tf f_{-\bw}(\phi(\bx'))$.

Therefore, since the Jacobian determinant of the map
$\bx'\mapsto \tfrac12 \bx' + \tfrac12 \phi(\bx')$ is equal
to $2^{-(m-1)}(1+s)^{m-1}$,
\begin{align*}
\big|\tf \tilde K(\bw) + \tf \tilde K(-\bw)\big|
\ge \int_{K'(\bw)} \big(
\tf f_\bw(\bx') + \tf f_{-\bw}(\phi(\bx'))
\big) 2^{-(m-1)}(1+s)^{m-1}\,d\bx'.
\end{align*}
Split this into two terms. The first of these is
\[ 2^{-m} (1+s)^{m-1} \int_{K'(\bw)}  f_\bw(\bx') \,d\bx'
=  2^{-m} (1+s)^{m-1} |\tilde K(\bw)| .\]
The second is
\begin{align*}
 2^{-m} (1+s)^{m-1} \int_{K'(\bw)}  f_{-\bw}(\phi(\bx')) \,d\bx'
&=  2^{-m} (1+s)^{m-1} \int_{K'(-\bw)}  f_{-\bw}(\bx') s^{-(m-1)} \,d\bx'
\\& =  2^{-m} (1+s)^{m-1} s^{-(m-1)} |\tilde K(-\bw)|  
\\& =  2^{-m} (1+s)^{m-1} s^{-(m-1)} |\tilde K(\bw)|  
\end{align*}
since $|\tilde K(-\bw)| = |\tilde K(\bw)|$.
Recombining these two results 
and using the inequality
\[ 2^{-m} (1+s)^{m-1} \big( 1+ s^{-(m-1)} \big) \ge (1+c(s-1)^2),\] 
where $c>0$ depends only on the dimension $m$, gives
\begin{equation}
\big|\tf \tilde K(\bw) + \tf \tilde K(-\bw)\big|
\ge (1+c(s-1)^2)  |\tilde K(\bw)|
\end{equation}
provided that $|\bw|$ is small.
Since $|s-1|$ is comparable to $|w_k|$, we have established the claim \eqref{eq:convexgeometry}.
Since $|w_k|$ is in turn comparable to $|\bq(\bw)-\bq(-\bw)|$,
\eqref{eq:ifwecan} follows from \eqref{eq:convexgeometry}. 
This completes the proof of Lemma~\ref{lemma:adjacent}.
\end{proof}

\begin{proof}[Proof of Proposition~\ref{prop:intervals}]
Denote by $S$ the set of all ordered pairs of adjacent vertices $(\bp,\bq)$ in $\fg$. 
Define a mapping $T$ from a neighborhood of $\bzero\in \reals^J$ to $(\reals^m)^S$ by
\begin{equation} T(\bw)(\bp,\bq) = [\bq(\bw)-\bq(\bzero)]-[\bp(\bw) - \bp(\bzero)]\,\in\,\reals^m \end{equation}
for $(\bp,\bq)\in S$.
We have seen in the above discussion that $T$ depends linearly on $\bw$ in
a small neighborhood of $\bzero$. Denote also by the symbol $T$
its unique extension to a linear mapping from $\reals^J$ to $(\reals^m)^S$.
It suffices to show that the nullspace
of this extension $T$ is equal to the image of $\reals^m$ in $\reals^J$ under the mapping
$\bv\mapsto \bL(\bv) = (L_j(\bv): j\in J)$. We have already remarked, immediately after
the statement of Lemma~\ref{lemma:adjacent}, that this nullspace does contain $\bL(\reals^m)$.

Let $\bw$ be an element of this nullspace.
Fix any vertex $\bp_0$ of $\fg$.  
The set of vertices $\bp\in\fg$ satisfying
$\bp(\bw)-\bp(\bzero)=\bp_0(\bw)-\bp_0(\bzero)$ 
is connected, since it is given that
$\bq(\bw)-\bq(\bzero)=\bp(\bw)-\bp(\bzero)$ whenever $\bp,\bq$ are adjacent.
Since this set contains $\bp_0$, and since $\fg$ is connected,
it follows that
$\bp(\bw)-\bp(\bzero)=\bp_0(\bw)-\bp_0(\bzero)$ 
for every vertex $\bp$ of $\fg$.

Since $J_{\bp_0}$ is a basis for $(\reals^m)^*$,
there exists a unique $\bv\in\reals^m$
satisfying $L_j(\bv) = w_j$ for each $j\in J_{\bp_0}$.
Define $\bz=(z_j: j\in J)$ by $z_j = w_j-L_j(\bv)$.
Then $\bp(\bz)=\bp(\bzero)$ for every vertex $\bp$.
It suffices to show that $\bz=\bzero$, and of course,
by linearity, it suffices to show this under the assumption that $\bz$ is small.

As was shown above, if $\bp$ is an extreme point of $\scriptk_\be$
then $\bp(\bz)=\bp(\bzero)$ if and only if $z_j=0$ for every $j\in J_\bp$. 
The admissibility hypothesis guarantees that each index $j\in J$
belongs to $J_\bp$ for some extreme point.
Indeed, the intersection of $\scriptk_\be$ with
$\{\bx: L_j(\bx)=e_j/2\}$ is nonempty by the admissibility hypothesis.
This intersection is compact and convex,
so contains at least one extreme point,
and its extreme points are also extreme points of $\scriptk_\be$. 
Thus $z_j=0$ for every index $j\in J$.
Equivalently, $w_j=L_j(\bv)$ for every $j\in J$.
\end{proof}

\section{Perturbative expansion} \label{section:pert}

We adapt the approach developed in \cite{christRSult} (see also \cite{christinterval})
to analyze $\Phi_\scriptl(\bE)$, under the assumption that $\dist(\bE,\scripto(\bEstar))$ is small
relative to $\max_j |E_j|$.
Throughout the discussion, $\scriptl$ is considered to be fixed,
and $\Phi=\Phi_\scriptl$.

The goal of \S\ref{section:pert} is to prove Proposition~\ref{prop:nearlyintervals},
which asserts that if $\bE$ nearly maximizes $\Phi_\scriptl$,
then each set $E_j$ must nearly coincide with an interval.
Conclusions concerning the relative arrangement of the centers of these intervals 
will not be drawn until \S\ref{section:hybrid}.

\begin{proposition} \label{prop:nearlyintervals}
Let $m,J,\scriptl,\scriptt_\scriptl,S$ satisfy the hypotheses of Theorem~\ref{thm:main2}.
There exist $\delta_0>0$ and $C<\infty$ such that the following holds for every $\be\in S$.
Let $\bE$ be a $J$--tuple of Lebesgue measurable subsets of $\reals$
satisfying $|E_j|=e_j$ for each $j\in J$,
and satisfying $\dist(\bE,\scripto(\bEstar))\le\delta_0$.
Then for each $k\in J$ there exists an interval $I_k\subset\reals$ such that 
\begin{equation} \label{eq:nearlyintervalsconclusion} 
|E_j\symdif I_j|^2 \le C\,(\Phi(\bEstar)-\Phi(\bE))
+ C\dist(\bE,\scripto(\bEstar))^3.  \end{equation}
\end{proposition}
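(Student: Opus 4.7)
My plan is to carry out the first- and second-order perturbative expansion advertised in the introduction. Using the translation invariance of $\Phi_\scriptl$, first reduce to the case where the optimal shift $\bv^\star \in \reals^m$ realizing $\dist(\bE,\scripto(\bEstar))$ equals $\bzero$, so that $\delta := \max_{j \in J}|E_j \symdif E_j^\star| = \dist(\bE,\scripto(\bEstar)) \le \delta_0$, and take $I_j := E_j^\star$ as the candidate interval. Decompose $g_j := \one_{E_j} - \one_{E_j^\star} = \one_{B_j} - \one_{A_j}$, where $A_j = E_j^\star \setminus E_j$ and $B_j = E_j \setminus E_j^\star$ have common measure $r_j/2 \le \delta/2$. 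The target bound then becomes $r_j^2 \le C(\Phi(\bEstar) - \Phi(\bE)) + C\delta^3$ for each $j \in J$.

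Next, multilinearly expand
\[
\Phi(\bEstar) - \Phi(\bE) = -\sum_{j \in J}\int_\reals g_j\, K_j \;-\; \sum_{\substack{S \subset J \\ |S|\ge 2}}\Phi_S(g),
\]
where $\Phi_S(g) := \int_{\reals^m}\prod_{i\in S} g_i(L_i\bx)\prod_{i\notin S}\one_{E_i^\star}(L_i\bx)\,d\bx$. By Lemma~\ref{lemma:addon}, $|\Phi_S(g)| \le C\prod_{i\in S}r_i$, so contributions with $|S|\ge 3$ are of size $O(\delta\cdot\sum_{i<j}r_ir_j) = O(\delta^3)$ and absorb into the error. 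For the first-order part, invoke strict admissibility: the hypothesis $D^-K_j(e_j/2) < 0$, combined with the concavity of $\log K_j$ (which via $D^+(\log K_j) \le D^-(\log K_j)$ at $e_j/2$ forces $D^+K_j(e_j/2) < 0$), produces a strict linear decay of $K_j$ across $\pm e_j/2$. A rearrangement argument (the infimum of $\int_A K_j - \int_B K_j$ over admissible $A,B$ of measure $r_j/2$ is attained when both concentrate symmetrically at $\pm e_j/2$) then yields $-\int g_j K_j = \int_{A_j}K_j - \int_{B_j}K_j \ge c\,r_j^2$, uniformly in $\be \in S$. This strengthens to a splitting estimate after decomposing $g_j = g_j^{(b)} + g_j^{(s)}$, with $g_j^{(b)}$ supported in an $O(\delta)$-neighborhood of $\{\pm e_j/2\}$ and $g_j^{(s)}$ the complementary piece: on the support of $g_j^{(s)}$ the kernel $K_j$ is uniformly strictly below $K_j(e_j/2)$, upgrading the quadratic bound there to the linear gain $-\int g_j K_j \gtrsim (r_j^{(b)})^2 + \|g_j^{(s)}\|_1$.

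The main obstacle is the sign-indefinite second-order contribution $\sum_{i<j}\Phi_{ij}(g_i,g_j)$: the naive estimate $|\Phi_{ij}| \le C r_ir_j$ from Lemma~\ref{lemma:addon} is of the same order as the quadratic first-order gain and therefore cannot simply be absorbed. To defeat it, use the splitting above. Contributions to $\Phi_{ij}$ involving at least one spread piece are of size $O(\delta\cdot\|g_j^{(s)}\|_1)$ (using $\|g_i^{(b)}\|_1 \le \delta$ and $\|g_j^{(s)}\|_1 \le \delta$ together with the strengthened linear gain) and thus absorbable into the first-order gain for $\delta_0$ sufficiently small. For the critical boundary--boundary piece, write $\Phi_{ij}(g_i^{(b)},g_j^{(b)}) = \iint g_i^{(b)}(u)g_j^{(b)}(v)M_{ij}(u,v)\,du\,dv$ with $M_{ij}$ the natural two-variable Radon-type kernel defined analogously to $K_j$, and exploit the global mean-zero property $\int g_j = 0$ through a double subtraction
\[
\iint g_i g_j \bigl[M_{ij}(u,v) - M_{ij}(u_0,v) - M_{ij}(u,v_0) + M_{ij}(u_0,v_0)\bigr]\,du\,dv,
\]
anchored at endpoints in $\{\pm e_i/2\}\times\{\pm e_j/2\}$, together with the Lipschitz smoothness of $M_{ij}$ in the interior of $\kbe$. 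After a piece-by-piece analysis of the four sign combinations in $\{\pm e_i/2\}\times\{\pm e_j/2\}$, this yields per-piece bounds of order $O(\delta^2 r_ir_j)$, which sum to $O(\delta^4)$. Assembling these refined second-order bounds with the quadratic first-order gain, summing over $j$, and rearranging yields the stated inequality, with the $O(\delta^3)$ residual absorbing the accumulated higher-order multilinear contributions.
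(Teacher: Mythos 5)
Your reduction to boundary-concentrated perturbations (splitting off the ``spread'' part of $f_j$ and absorbing all terms containing it into the linear first-order gain coming from $D^\pm K_j(e_j/2)<0$) matches the paper's Lemma~\ref{lemma:daggercomparison}, and your lower bound $-\int g_jK_j\ge c\,r_j^2$ is the paper's \eqref{eq:onesided}. But the treatment of the boundary--boundary second-order terms has a genuine gap, and it is precisely the crux of the proposition. After the reduction, $f_j$ is supported in $O(\delta)$-neighborhoods of $\pm e_j/2$, and the quadratic term collapses to $\langle \ovl_{i,j},F_i\otimes F_j\rangle$ where $F_j(\pm1)=\int_{|x\mp e_j/2|\le C\lambda\delta}f_j$. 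Since $F_j(-1)=-F_j(1)$, this equals $\bigl[\ovl_{i,j}(+,+)-\ovl_{i,j}(+,-)-\ovl_{i,j}(-,+)+\ovl_{i,j}(-,-)\bigr]F_i(1)F_j(1)$, and that second difference of the kernel across the four corners is generically \emph{nonzero}. So the term is genuinely of size $F_i(1)F_j(1)\asymp r_ir_j\asymp\delta^2$ with indeterminate sign --- exactly comparable to the first-order gain --- and cannot be $O(\delta^2 r_ir_j)$. Your double-subtraction device does not rescue this: anchoring at a single point $(u_0,v_0)$ preserves the exact identity from $\int g_i=\int g_j=0$ but the bracket is $O(1)$ (not $O(\delta^2)$) on the corners far from the anchor, since $|u-u_0|\asymp e_i$ there; anchoring separately at each of the four corners destroys the mean-zero cancellation, because the individual pieces $g_{j,\pm}$ are not mean-zero ($F_j(1)$ alone can be as large as $\delta$). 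Either way the claimed $O(\delta^4)$ bound fails.

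The paper's resolution is structurally different and you would need something like it: it never shows the quadratic form is small. Instead, for each fixed index $n$ it uses a one-parameter translation along $L_n$ (Lemma~\ref{lemma:balancing}, an intermediate-value argument) to arrange $F_n\equiv 0$, so every quadratic term involving $n$ vanishes; it then bounds the \emph{remaining} combination $\sum_{k\ne n}\langle K_k,f_k\rangle+\sum_{i<j\ne n}\langle\ovl_{i,j},F_i\otimes F_j\rangle$ from above by $O(\delta^3)$ by recognizing it as the second-order expansion of $\Phi(\tilde\bE)-\Phi(\bEstar)$ for the auxiliary tuple with $\tilde E_n=E_n^\star$ and invoking the Brascamp--Lieb--Luttinger inequality (Corollary~\ref{cor:quadexpansionsmall}). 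This isolates the single negative term $\langle K_n,f_n\rangle\le -c\int_{E_n\symdif E_n^\star}\bigl|\,|x|-e_n/2\,\bigr|$, giving the conclusion one index at a time. A secondary issue: your bound $|\Phi_S(g)|\le C\prod_{i\in S}r_i$ for $|S|\ge3$ does not follow from Lemma~\ref{lemma:addon} (which extracts only two $L^1$ norms); the paper needs either support disjointness from strict admissibility or linear independence of $\{L_i:i\in S\}$ via the genericity hypothesis (Lemma~\ref{lemma:independence}) to get three $L^1$ factors.
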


In \S\ref{section:hybrid} we will show that $\dist(\bE,\scripto(\bEstar))^2$
satisfies the same upper bound, allowing absorption of the cubic term
on the right-hand side of \eqref{eq:nearlyintervalsconclusion}  
into the left-hand side and thus completing the proofs of Theorems~\ref{thm:main}
and \ref{thm:main2}.

\subsection{Perturbation analysis: first order expansion}

Let $\bE$ be given. To simplify notation, define 
\begin{equation} \label{eq:dorbit} \delta = \dist(\bE,\scripto(\bEstar)).\end{equation}
Replace each $E_j$ by $E_j+L_j(\bv)$, where $\bv\in\reals^m$
is chosen so that $\max_j |(E_j+L_j(\bv))\symdif \Estar_j|\le 2\delta$.
Change notation, denoting $E_j+L_j(\bv)$ by $E_j$ and denoting $(E_j: j\in J)$
by $\bE$. Thus $|E_j\symdif \Estar_j|\le 2\dist(\bE,\scripto(\bEstar))$ for every $j\in J$.

Define $f_j:\reals\to\reals$ by 
\begin{equation}\label{eq:expandEj} \one_{E_j} = \one_{\Estar_j} + f_j.\end{equation}
$f_j$ vanishes on the complement of $E_j\symdif \Estar_j$,
satisfies $|f_j|\equiv 1$ on $E_j\symdif\Estar_j$,
and thus satisfies $\norm{f_j}_{L^1}=|E_j\symdif\Estar_j|\le 2\delta$.
It also satisfies $\int_{\reals^d} f_j=0$.
Inserting \eqref{eq:expandEj} in place of $\one_{E_j}$ for each index in the definition of
$\scriptt_\scriptl(\bE)$, then invoking the multilinearity of $\scriptt_\scriptl$,
yields an expansion of $\scriptt_\scriptl$ as a sum of $2^{|J|}$ terms.

The first-order terms in this expansion --- those that involve $f_j$ for a single 
index $j$ --- are $\langle K_j,f_j\rangle = \int_\reals K_j f_j$ where
$K_j$ are the kernels introduced in Definition~\ref{defn:K_j}.
Because the one-sided derivatives of $K_j$ are strictly negative at $e_j/2$
according to the strict admissibility hypothesis,
and because $K_j$ is nonincreasing on $[0,\infty)$
as shown in the discussion following Definition~\ref{defn:K_j},
\begin{equation} \label{eq:onesided} \langle K_j,f_j\rangle
\le -c\int \min(1,\big|\, |x|-e_j/2\,\big|) \cdot |f_j(x)|\,dx \end{equation}
for a certain constant $c>0$.
This holds for all $\be\in S$ and all $\bE$ 
satisfying $|E_j|=e_j$, with $c$ independent of $\be,\bE$. 

Let $\lambda\in\reals^+$ be a large positive constant, to be chosen below.
Like the constant $c$ in \eqref{eq:onesided}, $\lambda$
will depend on the compact set $S$ to which $\be$ is confined, but not otherwise on $\bE$. 
It is shown in \cite{christRSult} that
there exist sets $E_j^\dagger$ such that $\Estar_j\symdif E_j^\dagger
\subset \Estar_j\symdif E_j$, $|E_j^\dagger|=|E_j|$,
$E_j^\dagger\symdif \Estar_j\subset\{x: \big|\,|x|-e_j/2\,\big|\le\lambda \delta\}$,
and 
\[|\{x\in E_j\symdif E_j^\dagger: \big|\,|x|- e_j/2\,\big| 
\ge\lambda\delta\}| \ge \tfrac12 |E_j\symdif E_j^\dagger|.\]
Define $\bEdagger = (E_j^\dagger: j\in J)$,
and $f_j^\dagger = \one_{E_j^\dagger}-\one_{\Estar_j}$.

\begin{lemma} \label{lemma:daggercomparison}
There exists a constant $\lambda<\infty$ such that
for all $\be\in S$, all $\bE$ satisfying $|E_j|=e_j$
for all $j\in J$, and all sufficiently small $\delta>0$,
\begin{equation} \label{eq:daggercomparison}
\scriptt(\bE) \le \scriptt(\bEdagger) - c\lambda \delta \sum_j |E_j\symdif E_j^\dagger|.
\end{equation}
where $C,c\in\reals^+$ are independent of $\lambda,\delta$
so long as $\lambda\delta\le 1$. 
\end{lemma}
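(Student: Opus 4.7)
The plan is to exploit the first-order perturbative expansion of $\scriptt_\scriptl$ around $\bEstar$ developed earlier in the section, combined with a pointwise sign structure of $g_k := \one_{E_k^\dagger}-\one_{E_k}$ that renders the first-order terms automatically nonnegative, with a quantitative strict-decrease gain supplied by strict admissibility.

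The key structural observation I would first establish is that the defining conditions on $E_k^\dagger$ force additions (points where $g_k=+1$) to lie inside $\Estar_k$, and removals (points where $g_k=-1$) to lie outside $\Estar_k$. Outside the band $\{x:\,||x|-e_k/2|\le\lambda\delta\}$ this is immediate because $E_k^\dagger=\Estar_k$ there; inside the band, a short case analysis using the constraint $E_k^\dagger\symdif\Estar_k\subset E_k\symdif\Estar_k$ rules out the opposite configurations. Combined with the evenness and monotonicity of $K_k$ on $[0,\infty)$, this yields the pointwise inequality $g_k(x)\bigl(K_k(x)-K_k(e_k/2)\bigr)\ge 0$ for almost every $x\in\reals$.

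Next, since $|E_k^\dagger|=|E_k|$ gives $\int g_k=0$, the first-order term equals $\int g_k\bigl(K_k-K_k(e_k/2)\bigr)$. I would invoke the strict admissibility hypothesis $D^-K_k(e_k/2)<0$, together with log-concavity (which forces $D^+K_k(e_k/2)\le D^-K_k(e_k/2)<0$) and a compactness argument over $\be\in S$, to produce a uniform $c>0$ with $|K_k(x)-K_k(e_k/2)|\ge c\lambda\delta$ whenever $||x|-e_k/2|\ge\lambda\delta$ and $\lambda\delta\le 1$. Combined with the property that at least half of $|E_k\symdif E_k^\dagger|$ lies outside the band, and noting that the inside-band contribution is merely nonnegative (no Lipschitz loss is incurred, thanks to the pointwise sign match), this yields the first-order bound $\int g_k K_k\ge\tfrac{c\lambda\delta}{2}\,|E_k\symdif E_k^\dagger|$.

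Finally, I would control the higher-order remainder in the multilinear expansion of $\scriptt(\bEdagger)-\scriptt(\bE)$ in the perturbations $f_j=\one_{E_j}-\one_{\Estar_j}$ and $f_j^\dagger=\one_{E_j^\dagger}-\one_{\Estar_j}$ by telescoping each order-$\ge 2$ term in the single coordinate being varied. Each such difference becomes an expression involving one factor $g_k$ and at least one other factor of $L^1$-size $O(\delta)$, which Lemma~\ref{lemma:addon} bounds by $C\delta\,|E_k\symdif E_k^\dagger|$. Summing yields a remainder bounded by $C\delta\sum_k|E_k\symdif E_k^\dagger|$, and assembling the contributions produces $\scriptt(\bEdagger)-\scriptt(\bE)\ge\bigl(\tfrac{c\lambda}{2}-C\bigr)\delta\sum_k|E_k\symdif E_k^\dagger|$; choosing $\lambda\ge 4C/c$ completes the argument. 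The main obstacle, and the conceptual heart of the proof, is the pointwise sign structure of $g_k$: without it, the inside-band portion of the first-order term could contribute a Lipschitz error of order $C\lambda\delta$ that would swamp the outside-band gain whenever the Lipschitz constant of $K_k$ near $e_k/2$ exceeds the strict-decrease constant.
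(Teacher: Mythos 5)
Your proof is correct and follows essentially the same route as the paper's: a multilinear expansion about $\bEstar$, a first-order gain of order $\lambda\delta\,|E_k\symdif E_k^\dagger|$ coming from the monotonicity and strict decrease of $K_k$ at $e_k/2$ together with the defining properties of $E_k^\dagger$, and an $O(\delta\sum_k|E_k\symdif E_k^\dagger|)$ bound on the higher-order terms via Lemma~\ref{lemma:addon} applied to the differences $g_k$, followed by choosing $\lambda$ large. Your explicit verification of the pointwise sign structure of $\one_{E_k^\dagger}-\one_{E_k}$ (which follows already from $\Estar_k\symdif E_k^\dagger\subset\Estar_k\symdif E_k$) is a welcome elaboration of a step the paper leaves implicit in its appeal to \eqref{eq:onesided}.
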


Here, and below, $\delta$ denotes $\dorbit$, as in \eqref{eq:dorbit}.
Since we aim to establish a conclusion only for
all sufficiently small $\delta$, there is no loss of generality
in requiring that $\lambda\delta\le 1$. 

\begin{proof}
In the integral defining $\Phi(\bE)$, substitute $\one_{E_j}=\one_{\Estar_j}+f_j$
and exploit multilinearity to expand the integral as a sum of $2^{|J|}$ terms. 
Those terms in which one single $f_j$ appears are of the form $\langle K_j,f_j\rangle$.
Provided that $\lambda\delta\le 1$,
\begin{equation}
\langle K_j,f_j\rangle \le \langle K_j,f_j^\dagger\rangle 
- c\lambda\delta |E_j\symdif E_j^\dagger| \ \text{ for each $j\in J$.}
\end{equation}

Consider each term in which $f_j$ appears for two or more distinct indices.
For each such index, express $f_j = f_j^\dagger + g_j$
where $g_j=f_j-f_j^\dagger$, and again use the multilinearity
of $\Phi$ to expand into finitely many terms, involving $\one_{\Estar_i},f_j^\dagger$,
and $g_k$ for various indices $i,j,k$.
By collecting all the resulting terms that do not involve any $g_j$,
and adding their sum to $\Phi(\bEstar) + \sum_{j\in J} \langle K_j,f_j^\dagger\rangle$,
we obtain the full expansion for $\Phi(\bEdagger) = \Phi(\one_{\Estar_j} + f_j^\dagger: j\in J)$.

Every term that remains 
involves one or more $f_i^\dagger$ and one $g_j$, or involves two or more $g_k$.
Since no two $L_j$ are colinear,
and since $\norm{f_k^\dagger}_\infty,\norm{g_k}_\infty\le 1$, $\norm{f_i^\dagger}_1\le 2\delta$,
$\norm{g_i}_1\le |E_i\symdif E_i^\dagger|$, and $|E_i\symdif E_i^\dagger|\le 2\delta$,
the contribution of any such term is $O(\delta \max_j |E_j^\dagger\symdif\bEstar|)$
by Lemma~\ref{lemma:addon}.
We conclude that
\begin{equation}
\scriptt(\bE) \le \scriptt(\bEdagger) 
- c\lambda\delta \sum_j |E_j\symdif E_j^\dagger|
+ C\delta \sum_j |E_j\symdif E_j^\dagger|.
\end{equation}
The conclusion \eqref{eq:daggercomparison} of the lemma follows if $\lambda$ is sufficiently large.
\end{proof}

Choose and fix a constant $\lambda$ sufficiently large for Lemma~\ref{lemma:daggercomparison}
to apply. Assume henceforth that $\delta\le\lambda^{-1}$.
Two useful conclusions can be drawn. 
Firstly,  
$\scriptt(\bE)\le\scriptt(\bEdagger)$.
Secondly, since $\scriptt(\bEdagger) \le \scriptt(\bEstar)$ by the Brascamp-Lieb-Luttinger inequality,
if $\max_j |E_j\symdif E_j^\dagger| > \tfrac14 \delta = \tfrac14\dist(\bE,\scripto(\bEstar))$
then $\scriptt(\bE)\le \scriptt(\bEstar)-c\delta^2$, as was to be shown.

There remains the case in which
$\max_j |E_j\symdif E_j^\dagger|\le \tfrac14 \delta = \tfrac14\dist(\bE,\scripto(\bEstar))$.
In this case,
\[\dist(\bEdagger,\scripto(\bEstar)) \ge \tfrac12\dist(\bE,\scripto(\bEstar)).\]
If we can show that 
\[\scriptt(\bEdagger)\le \scriptt(\bEstar)-c\dist(\bEdagger,\scripto(\bEstar))^2\]
then the proof of Theorem~\ref{thm:main2} will be complete. 
Equivalently, we have reduced matters to the case in which $\bE$ has the
supplementary property that $E_j\symdif \Estar_j\subset
\{x\in\reals: \big|\,|x|-e_j/2\,\big|\le C_0\dist(\bE,\scripto(\bEstar))\}$ for every $j\in J$,
where $C_0$ is some finite constant. This constant is not at our disposal to be chosen below; it
is proportional to the chosen $\lambda$, so must be regarded as given.
The notation $\bEdagger$ will not be used below. Instead, 
we analyze tuples $\bE$ that possess this supplementary property,
and  denote $\dist(\bE,\scripto(\bEstar))$ by $\delta$.

\subsection{Perturbation analysis: second order expansion} 

For any two distinct indices $i,j\in J$
define $L_{i,j}:\reals^2\to[0,\infty)$ by 
\begin{equation} \iint L_{i,j}(x,y)f_i(x)f_j(y)\,dx\,dy
= \Phi(\bg), \end{equation}
where $\bg=(g_n: n\in J)$ is defined by $g_i=f_i$, $g_j=f_j$,
and $g_k = \one_{E_k^\star}$ for every $k\notin\{i,j\}$.
We write $\langle L_{i,j},f_i\otimes f_j\rangle = \iint L_{i,j}(x,y)f_i(x)f_j(y)\,dx\,dy$.

\begin{lemma} \label{lemma:LijLipschitz}
Let $\be\in S$.
For each $i\ne j\in J$ there exists a neighborhood of each of the $4$ points
$(\pm e_i/2,\pm e_j/2)$, in which $L_{i,j}$ is Lipschitz continuous. 
\end{lemma}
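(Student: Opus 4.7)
My plan is to realize $L_{i,j}(x,y)$ as a positive constant multiple of the $(m-2)$-dimensional volume of a family of convex polytopes in $\reals^{m-2}$ whose defining half-spaces depend linearly on $(x,y)$, and then to derive Lipschitz continuity via elementary convex geometry. By symmetry under $x\mapsto -x$ and $y\mapsto -y$ it suffices to treat a single corner point, say $(e_i/2,e_j/2)$.

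First, I would choose coordinates on $\reals^m$ in which $L_i(\bz)=z_1$ and $L_j(\bz)=z_2$. Writing $\bz=(z_1,z_2,\bz')$ with $\bz'\in\reals^{m-2}$, decompose $L_k(\bz)=a_k z_1+b_k z_2+\langle c_k,\bz'\rangle$ for each $k\ne i,j$. Fubini then yields
$$L_{i,j}(x,y) \;=\; J\cdot\mathrm{vol}_{m-2}\bigl(P(x,y)\bigr),$$
for a positive Jacobian constant $J$, where
$$P(x,y)\;=\;\{\bz'\in\reals^{m-2}\,:\,|a_k x+b_k y+\langle c_k,\bz'\rangle|\le e_k/2\ \text{ for all }\ k\ne i,j\}.$$
Nondegeneracy condition (iii) of Definition~\ref{defn:nondegenerate} forces $\{L_k:k\in J\}$ to span $(\reals^m)^*$, so the projected vectors $\{c_k:k\ne i,j\}$ span $\reals^{m-2}$. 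Consequently $P(x,y)$ is a bounded convex polytope for every $(x,y)$, with diameter uniformly bounded on any compact set of parameters.

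Next I would invoke the standard convex-geometric fact that for a bounded convex polytope in $\reals^{m-2}$, the volume, viewed as a function of the right-hand sides of its defining half-spaces, has one-sided derivatives equal to signed sums of the $(m-3)$-dimensional measures of the corresponding facets. Since every facet is contained in the polytope $P(x,y)$ and this polytope is uniformly bounded, the facet measures are uniformly bounded in a neighborhood of $(e_i/2,e_j/2)$. The resulting uniform bound on one-sided partial derivatives of $\mathrm{vol}_{m-2}(P(x,y))$ with respect to $x$ and $y$ yields the desired Lipschitz continuity of $L_{i,j}$.

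The main technical obstacle is the behavior at the corner itself, where the combinatorial type of $P(x,y)$ can change abruptly and several facets may become degenerate simultaneously. This is where the genericity hypothesis enters. If there is an extreme point $\bx^*$ of $\scriptk_\be$ with $L_i(\bx^*)=e_i/2$ and $L_j(\bx^*)=e_j/2$, then genericity forces the active index set $J_{\bx^*}$ to have cardinality exactly $m$ and forces $\{L_k:k\in J_{\bx^*}\}$ to be a basis of $(\reals^m)^*$ (cf.\ Lemma~\ref{lemma:independence}). This structural information pins down which facets of $P(x,y)$ meet the slice near the corner, rules out pathological coincidences of non-independent constraints, and validates the uniform face-area bound invoked above.
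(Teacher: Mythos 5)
There is a genuine gap, and it sits exactly where the lemma has real content. Your fiber decomposition $L_{i,j}(x,y)=J\cdot\mathrm{vol}_{m-2}(P(x,y))$ is correct, and for those $k\ne i,j$ whose projection $c_k$ onto $(\reals^{m-2})^*$ is nonzero, a facet (or slab) estimate does bound the contribution to the modulus of continuity --- note, though, that the derivative of the volume in the offset $b_k$ is $|c_k|^{-1}$ times the facet measure, not the facet measure itself. The argument collapses when $c_k=0$, i.e.\ when some $L_k$ with $k\ne i,j$ lies in $\operatorname{span}\{L_i,L_j\}$. Nothing in Definition~\ref{defn:nondegenerate} forbids this (condition (ii) only excludes proportionality to a \emph{single} other $L_l$; take e.g.\ $m=3$ with $L_1=x_1$, $L_2=x_2$, $L_4=x_1+x_2$). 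For such $k$ the constraint $|a_kx+b_ky|\le e_k/2$ does not cut the fiber at all: it contributes a factor $\one_{\{|a_kx+b_ky|\le e_k/2\}}$ to $L_{i,j}$, which is a jump discontinuity across a line in the $(x,y)$-plane. No facet-area bound controls this, and indeed $L_{i,j}$ is genuinely \emph{not} Lipschitz on all of $\reals^2$ in general --- which is why the lemma is stated only near the four corner points. The $m=2$ case, where $L_{i,j}$ is a pure indicator of a polygon (every $c_k$ vanishes vacuously), is the extreme instance of this failure mode and is not addressed by your proof at all.

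The actual task, which your last paragraph gestures at but does not carry out, is to show that the corner $(\pm e_i/2,\pm e_j/2)$ avoids the bad set. Your stated worry (combinatorial degeneration of facets) is not the obstruction: the slab estimate is insensitive to changes of combinatorial type, so if all $c_k\ne 0$ one gets Lipschitz continuity everywhere with no genericity needed. The paper instead argues via Brunn--Minkowski that $\log L_{i,j}$ is concave where positive, and then uses genericity to establish a dichotomy: either $L_{i,j}$ vanishes identically near the corner, or the corner lies in the \emph{interior} of the region where $L_{i,j}>0$ (this is where the structure of the extreme point $\bx$ with $L_i(\bx)=e_i/2$, $L_j(\bx)=e_j/2$ and $|J_\bx|=m$ is used). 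Either alternative gives local Lipschitz continuity. To repair your proof you would need an analogous step: show that for every $k\ne i,j$ with $L_k\in\operatorname{span}\{L_i,L_j\}$, the corner satisfies $|a_k(\pm e_i/2)+b_k(\pm e_j/2)|\ne e_k/2$ whenever the corresponding indicator is not identically $0$ or $1$ near the corner --- which again requires the genericity hypothesis, applied much as in the paper.
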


\begin{proof}
Suppose without loss of generality that both signs are $+$;
one can reduce to this case by replacing $L_i$ and/or $L_j$
by $-L_i,-L_j$, respectively.
If $m=2$, then $L_{i,j}$ is constant in some neighborhood of $(e_i/2,e_j/2)$; 
this constant is the reciprocal of the absolute value of the Jacobian determinant
of the mapping $\reals^m\to\reals^2$ defined by $\bx\mapsto (L_i(x),L_j(x))$.

Suppose that $m\ge 3$.
According to the Brunn-Minkowski inequality,
$\log L_{i,j}$ is a concave function on $\{(x,y)\in\reals^2: L_{i,j}(x,y)\ne 0\}$,
so if $L_{i,j}(e_i/2,e_j/2)\ne 0$ then $L_{i,j}$ is Lipschitz in a neighborhood of $(e_i/2,e_j/2)$. 
We claim that $L_{i,j}$ either
vanishes identically in some neighborhood of $(e_i/2,e_j/2)$,
in which case it is certainly locally Lipschitz, or $L_{i,j}(e_i/2,e_j/2)\ne 0$. 
In either case, the proof would be complete.

Suppose to the contrary that $L_{i,j}(e_i/2,e_j/2)=0$,
but that $L_{i,j}$ does not vanish identically in any neighborhood of this point.
Then for any $\eps>0$ there exists $\by\in\scriptk_\be$
that satisfies $L_k(\by)\ge e_k/2 -\eps$ for $k=i$ and for $k=j$.
Indeed, 
if $L_{i,j}(y,y')\ne 0$ then there exist functions $g_i,g_j$ supported in arbitrarily
small neighborhoods of $y,y'$ respectively such that
$\int_{\reals^m} \prod_{k\in J} g_k\circ L_k\ne 0$.
where $g_k=\one_{E_k^\star}$ for each $k\notin\{i,j\}$.
Thus there exists $\bx\in\scriptk_\be$ with $L_i(\bx),L_j(\bx)$ equal to $y,y'$, respectively. 

Since $\scriptk_\be$ is compact, 
there must consequently exist $\bz\in\scriptk_\be$ that satisfies $L_k(\bz)=e_k/2$ for $k=i,j$.
Therefore there exists an extreme point $\bx$ of $\scriptk_\be$ that also satisfies these two equations.

Define $\scriptk_{\be,i,j}=\{\bx\in\reals^m: |L_n(x)|\le e_n/2
\text{ for every } n\in J\setminus\{i,j\} \}$.
The point $\bx$ belongs to $\scriptk_{\be,i,j}$. 
For any $s,t\in\reals$, $L_{i,j}(s,t)$ is equal to a nonzero constant multiple
of the $m-2$--dimensional Lebesgue measure of the set of all $\bz\in\scriptk_{\be,i,j}$
that satisfy $L_i(\bz)=s$ and $L_j(\bz)=t$.

According to the genericity hypothesis,
there exists a neighborhood of $\bx$ in which $\scriptk_\be$
is defined by $m$ linearly independent inequalities
$L_n(\by)\le e_n/2$ or $L_n(\by)\ge -e_n2$ for $n\in J_\bx$; both $i$ and $j$
are among the $m$ elements of $J_\bx$.
By replacing $L_n$ by $-L_n$ as necessary, we may without loss of generality arrange the signs
so that each of these inequalities is of the form $L_n(\by)\le e_n/2$. 
Moreover, $J_n(\bx)$ is strictly less than $e_n/2$ for every $n\in J\setminus J_\bx$.
Therefore in a neighborhood of $\bx$,
the boundary of $\scriptk_{\be,i,j}$ coincides with the $2$--dimensional affine subspace 
defined by $m-2$ equations
$L_n(\by) = \pm e_n/2$, with $n$ varying over  $J_\bx\setminus\{i,j\}$.
The mapping from this affine subspace to $\reals^2$ defined by $\by\mapsto h(\by) = (L_i(\by),L_j(\by))$
is an affine bijection. 
Thus $(e_i/2,e_j/2)$ lies in the interior of the image in $\reals^2$
of $\scriptk_{\be,i,j}$ under $h$. Since $\scriptk_\be$ is convex and has nonempty
interior, for each $z$ in the interior of $h(\scriptk_{\be,i,j})$,
$h^{-1}(z)$ contains a nonempty subset that is open in the relative topology,
hence has strictly positive $m-2$--dimensional Lebesgue measure.
In particular, $L_{i,j}(e_i/2,e_j/2)>0$.
\end{proof}

\begin{lemma} \label{lemma:quadraticexpansion}
Let $\lambda<\infty$.
Under the hypotheses of Theorem~\ref{thm:main},
there exists $\delta_0>0$ such that if 
$E_j\symdif \Estar_j\subset\{x: \big|\,|x|-e_j/2\,\big|\le\lambda\delta_0\}$ for each $j\in J$ then
\begin{equation} \scriptt(\bE) = \scriptt(\bEstar) + \sum_{j\in J} 
\langle K_j,f_j\rangle + \sum_{i<j\in J} \langle L_{i,j},f_i\otimes f_j\rangle + O(\dorbit^3),
\end{equation}
with the second summation taken over all distinct indices in $J$.
\end{lemma}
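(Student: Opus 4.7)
My plan is to exploit the multilinearity of $\scriptt$ in the decomposition $\one_{E_j}=\one_{E_j^\star}+f_j$, obtaining a sum of $2^{|J|}$ terms indexed by subsets $T\subset J$ of \emph{active} indices; identify the terms with $|T|\le 2$ with the leading contributions in the statement; and bound the $O(1)$-many remaining terms by $O(\dorbit^3)$ using Lemma~\ref{lemma:addon}.

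Expanding by multilinearity gives $\scriptt(\bE)=\sum_{T\subset J}\scriptt(\bg^T)$, where $g_k^T=f_k$ for $k\in T$ and $g_k^T=\one_{E_k^\star}$ otherwise. The $T=\emptyset$ term is $\scriptt(\bEstar)$. For $T=\{j\}$, writing $f_j=\one_{E_j\setminus E_j^\star}-\one_{E_j^\star\setminus E_j}$ and applying Definition~\ref{defn:K_j} to each summand yields $\scriptt(\bg^{\{j\}})=\langle K_j,f_j\rangle$. For $T=\{i,j\}$ with $i<j$, the term is $\langle L_{i,j},f_i\otimes f_j\rangle$ directly by the definition of $L_{i,j}$. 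The supplementary hypothesis confines each $f_k$ to a neighborhood of $\{\pm e_k/2\}$ of size $\lambda\delta_0$, and Lemma~\ref{lemma:LijLipschitz} (whose proof invokes genericity) then guarantees that $L_{i,j}$ is bounded on the support of $f_i\otimes f_j$, so these pairings are well-defined and of the expected magnitude $O(\dorbit^2)$.

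For $|T|\ge 3$, I apply Lemma~\ref{lemma:addon} with any choice of two distinct $i_1,i_2\in T$:
$$|\scriptt(\bg^T)|\le C\|f_{i_1}\|_1\|f_{i_2}\|_1\prod_{k\ne i_1,i_2}\min(\|g_k^T\|_1,\|g_k^T\|_\infty).$$
For $k\in T\setminus\{i_1,i_2\}$ one has $\min(\|f_k\|_1,\|f_k\|_\infty)\le\|f_k\|_1=|E_k\symdif E_k^\star|\le 2\dorbit$; for $k\notin T$, $\min(e_k,1)$ is uniformly bounded. Hence $|\scriptt(\bg^T)|\le C_T\dorbit^{|T|}\le C_T\dorbit^3$ once $\dorbit\le 1$, and summing over the finitely many such $T$ gives the claimed $O(\dorbit^3)$ remainder.

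The only genuinely non-routine point is the well-definedness of the second-order pairings: without genericity, the Brunn--Minkowski concavity of $\log L_{i,j}$ alone does not preclude blow-up of $L_{i,j}$ at the corners $(\pm e_i/2,\pm e_j/2)$, precisely where the support of $f_i\otimes f_j$ concentrates under the supplementary hypothesis. Lemma~\ref{lemma:LijLipschitz} resolves this at the cost of genericity; once it is in hand, the expansion is mechanical and the cubic remainder is controlled by the $L^1$ bounds $\|f_k\|_1\le 2\dorbit$ combined with Lemma~\ref{lemma:addon}.
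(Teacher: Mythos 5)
Your expansion by multilinearity and the identification of the terms with $|T|\le 2$ agree with the paper; the gap is in your treatment of the terms with $|T|\ge 3$, which is in fact the entire content of the lemma. You invoke Lemma~\ref{lemma:addon} to extract an $L^1$ factor $\norm{f_k}_1=O(\dorbit)$ for \emph{every} $k\in T$, but that lemma (as its proof shows) only yields small $L^1$ factors for indices lying in a single \emph{linearly independent} subset of $\{L_j\}$ of cardinality $m$; for every index outside that subset one is forced to use $\norm{\cdot}_\infty$, which for $f_k$ is $1$, not $O(\dorbit)$. The bound $|\scriptt(\bg^T)|\le C\prod_{k\in T}\norm{f_k}_1$ is simply false when $\{L_k:k\in T\}$ is linearly dependent: already for the Riesz--Sobolev form on $\reals^2$ with $T=\{1,2,3\}$ one has $\iint f_1(x)f_2(y)f_3(-x-y)\,dx\,dy=\int f_3(u)(f_1*f_2)(-u)\,du$, which can be of size $\asymp\dorbit^2$ (not $\dorbit^3$) when the supports of $f_1*f_2$ and $f_3(-\cdot)$ overlap, e.g.\ when $e_3=e_1+e_2$. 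A remainder of size $\dorbit^2$ is of the same order as the second-order terms being retained, so the asserted expansion would be vacuous. (Your flagged ``non-routine point,'' the well-definedness of $\langle L_{i,j},f_i\otimes f_j\rangle$, is by contrast automatic: that pairing is by definition $\Phi(\bg)$ for an admissible tuple $\bg$, finite by Lemma~\ref{lemma:continuity}; the Lipschitz property of $L_{i,j}$ is needed only later, in Lemma~\ref{lemma:S0expansion}.)

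The paper closes this gap with a dichotomy that uses the structural hypotheses in an essential way. For a term with active set $J'$, $|J'|\ge 3$: either there is no $\by\in\scriptk_\be$ with $|L_j(\by)|=e_j/2$ for all $j\in J'$, in which case strict admissibility plus compactness of $S$ furnish an $\eta>0$ so that the integrand $\prod_{j\in J'}f_j\circ L_j\cdot\prod_{i\notin J'}\one_{\Estar_i}\circ L_i$ vanishes identically once $\lambda\delta_0<\eta$ (this is the only case that occurs when $m=2$, where the remainder is exactly zero); or such a $\by$ exists, in which case an extreme point of $\scriptk_\be$ satisfies the same equations and the genericity hypothesis, through Lemma~\ref{lemma:independence}, forces $\{L_j:j\in J'\}$ to be linearly independent --- and only then does the change-of-variables bound legitimately produce $\prod_{j\in J'}|E_j\symdif \Estar_j|=O(\dorbit^{|J'|})=O(\dorbit^3)$. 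Without some such input your estimate cannot be repaired, which is precisely why genericity appears among the hypotheses.
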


Before proving Lemma~\ref{lemma:quadraticexpansion}, we record a consequence.

\begin{corollary} \label{cor:quadexpansionsmall}
Under the hypotheses of Lemma~\ref{lemma:quadraticexpansion},
\begin{equation} \label{quadraticUB}
\sum_{j\in J} \langle K_j,f_j\rangle
+ \sum_{i < j\in J} \langle L_{i,j},f_i\otimes f_j\rangle \le O(\dorbit^3).
\end{equation}
\end{corollary}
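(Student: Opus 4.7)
The plan is to extract this bound as an essentially immediate consequence of Lemma~\ref{lemma:quadraticexpansion}, combined with the Brascamp-Lieb-Luttinger inequality \eqref{eq:BLL}. The work has already been done in the lemma; the corollary is just the reorganization that puts the quadratic-and-lower-order information into an inequality rather than an equality.

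Concretely, I would first note that by \eqref{eq:BLL},
\begin{equation*}
\scriptt_\scriptl(\bE) - \scriptt_\scriptl(\bEstar) \le 0.
\end{equation*}
Applying Lemma~\ref{lemma:quadraticexpansion}, this difference equals
\begin{equation*}
\sum_{j\in J} \langle K_j,f_j\rangle + \sum_{i<j\in J} \langle L_{i,j}, f_i\otimes f_j\rangle + O(\dorbit^3).
\end{equation*}
Thus the sum of first and second order terms is bounded above by an $O(\dorbit^3)$ quantity coming from transferring the error term in the expansion across the inequality, which is precisely the assertion of \eqref{quadraticUB}. The hypothesis $E_j \symdif \Estar_j \subset \{x : \big|\,|x|-e_j/2\,\big| \le \lambda \delta_0\}$ is inherited from Lemma~\ref{lemma:quadraticexpansion} and is needed only to make the expansion valid.

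There is no real obstacle here, since the entire content of the corollary is already encoded in the lemma. The only small point worth flagging is that one must be slightly careful about the sign: the error term from Lemma~\ref{lemma:quadraticexpansion} is a two-sided $O(\dorbit^3)$, so after using $\scriptt_\scriptl(\bE) \le \scriptt_\scriptl(\bEstar)$ and moving the $O(\dorbit^3)$ term to the right, we obtain an upper bound (not an equality) for the first- plus second-order sum, which is what is claimed. This bound will serve in the sequel as the basic coercivity input: since the first-order contributions $\langle K_j, f_j\rangle$ are nonpositive by \eqref{eq:onesided}, the displayed inequality will force the second-order quadratic form to essentially dominate the first-order defects up to cubic error, which is the mechanism behind Proposition~\ref{prop:nearlyintervals}.
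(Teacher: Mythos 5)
Your proof is correct and is exactly the argument the paper gives: combine the expansion of Lemma~\ref{lemma:quadraticexpansion} with the Brascamp-Lieb-Luttinger inequality $\Phi(\bE)-\Phi(\bEstar)\le 0$ and absorb the two-sided $O(\dorbit^3)$ error into the right-hand side. Your remark that this yields only a one-sided bound matches the paper's own caveat.
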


This provides an upper bound only for the left-hand side of \eqref{quadraticUB}, 
not for its absolute value.  It follows from 
Lemma~\ref{lemma:quadraticexpansion} together with the 
Brascamp-Lieb-Luttinger inequality $\Phi(\bE)-\Phi(\bEstar)\le 0$.
\qed

\begin{proof}[Proof of Lemma~\ref{lemma:quadraticexpansion}]
Consider first the case in which $m=2$. 
If $\delta_0$ is sufficiently small then
\[ \Phi(\bE) = \Phi(\bEstar) + \sum_{k\in J} \langle K_k,f_k\rangle
+ \sum_{i<j} \langle L_{i,j},f_i\otimes f_j\rangle.\]
Indeed, if $i,j,k\in J$ are three distinct indices
then by the strict admissibility hypothesis,
there exists no $\bx\in\scriptk_\be$ satisfying $|L_n(\bx)| = e_n/2$
for all three values $n\in\{i,j,k\}$. 
From the compactness of $S$ it follows that there exists $\eta>0$
such that for any $\be\in S$ there exists no $\bx\in\reals^m$
satisfying $|L_n(\bx)-e_n/2|\le \eta$ for each $n\in\{i,j,k\}$
and $|L_m(\bx)|\le \tfrac12 e_m + \eta$ for every $m\in J\setminus\{i,j,k\}$.
Thus  provided that $\delta$ is sufficiently small, the threefold product
$f_i(L_i(\bx))f_j(L_j(\bx))f_k(L_k(\bx))$ vanishes identically as a function of $\bx\in\scriptk_\be$.
Therefore
\[f_i(L_i(\bx))f_j(L_j(\bx))f_k(L_k(\bx)) \prod_{m\in J\setminus\{i,j,k\}} g_m(L_m(\bx))\equiv 0
\text{ on } \reals^m\]
whenever each $g_m$ equals either $f_m$ or $\one_{E_m^\star}$.
Therefore when $m=2$, each term in the expansion of $\Phi(\bE)$ in which
at least three factors $\one_{E_n^\star}\circ L_n$
are replaced by $f_n\circ L_n$, vanishes. 
The $O(\delta^3)$ remainder term is in fact equal to zero in this case.

Next, suppose that $m\ge 3$. 
Again, consider any term in the expansion of $\Phi(\bE)$
in which $\one_{E_j^\star}$ is replaced by $f_j$
for at least $3$ distinct indices $j$. Let $J'$ be the set of all such indices
for this particular term.
The integrand in the integral defining this term is equal to
\[ \prod_{j\in J'} f_j(L_j(\by))
 \prod_{i\in J\setminus J'} \one_{E_i^\star}(L_i(\by)).\]
If there exists no point $\by\in\scriptk_\be$
satisfying $|L_j(\by)|=e_j/2$ for every $j\in J'$
then the integrand vanishes identically, provided that $\delta_0$ is sufficiently small, 
as in the discussion for $m=2$, above.

If there does exist $\by\in\scriptk_\be$ satisfying $|L_j(\by)|=e_j/2$ for every $j\in J'$ 
then there exists an extreme point $\by'$ of $\scriptk_\be$ that satisfies the same set of equations.
Then according to Lemma~\ref{lemma:independence}, $\{L_j: j\in J'\}$ is linearly independent.
In that case, for any $B<\infty$ there exists $C<\infty$ such that
whenever $|A_i|\le B$ for all $i\in J\setminus J'$,
\[ \int_{\reals^m}\prod_{j\in J'} \one_{A_j}(L_j(\bx))\prod_{i\in J\setminus J'}\one_{A_i}(L_i(\bx))\,d\bx
\le C\prod_{j\in J'}|A_j|.\]
Indeed, choose $\tilde J\subset J$
to contain $J'$ and to be a basis for $\reals^{m*}$.
Then
\[ \int_{\reals^m}\prod_{j\in J} \one_{A_j}(L_j(\bx))\,d\bx
\le \int_{\reals^m}\prod_{j\in \tilde J} \one_{A_j}(L_j(\bx))\,d\bx
= c\prod_{j\in \tilde J} |A_j|
\le cB^{|\tilde J|-|J'|}
\prod_{j\in \tilde J} |A_j| \]
where $c$ depends only on $\{L_n: n\in\tilde J\}$.
Since $|f_j|\le \one_{E_j\symdif \Estar_j}$, $|E_j\symdif \Estar_j|=O(\dorbit)$,
and $|J'|\ge 3$, $\prod_{j\in J'}|E_j\symdif \Estar_j| = O(\dorbit^3)$.
\end{proof}

\subsection{Perturbation analysis: exploitation of cancellation}

\begin{lemma} \label{lemma:nearlyinterval}
There exists $C<\infty$ such that for any $\scriptl,S,\be,\bE$
satisfying the above hypotheses, 
for each $n\in J$ there exists $w_n\in\reals$ such that
\begin{equation}
\int_{(E_n+w_n)\symdif\Estar_n} \big|\,|x|-\tfrac{e_n}2\,\big|\,dx  
\le C(\scriptt(\bEstar)-\scriptt(\bE)) + C\dist(\bE,\scripto(\bEstar))^3.
\end{equation}
\end{lemma}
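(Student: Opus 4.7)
The plan is to begin from Corollary~\ref{cor:quadexpansionsmall} combined with the first-order bound \eqref{eq:onesided}, which gives
\[
c\sum_j\int\big|\,|x|-e_j/2\,\big||f_j|\,dx \le \Phi(\bEstar)-\Phi(\bE) + \sum_{i<j}\langle L_{i,j},f_i\otimes f_j\rangle + O(\delta^3),
\]
where $f_j=\one_{E_j}-\one_{\Estar_j}$ and $\delta=\dist(\bE,\scripto(\bEstar))$. The remaining task is to control the bilinear cross-terms by using the freedom to pick each $w_n$ separately.

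I split $f_n=f_n^++f_n^-$ into the parts supported in $(0,\infty)$ and $(-\infty,0)$, and set $m_n=\int f_n^+$; since $|E_n|=|\Estar_n|$, $\int f_n^-=-m_n$. By Lemma~\ref{lemma:LijLipschitz} each $L_{i,j}$ is Lipschitz near the four corner points $(\sigma e_i/2,\tau e_j/2)$, so replacing $L_{i,j}$ by its value at the appropriate corner produces a quadratic form $Q(\mathbf{m})=\sum_{i<j}c_{ij}m_im_j$ in $\mathbf{m}=(m_n)_{n\in J}$, plus a remainder bounded by $O(\delta\sum_n\int\big|\,|x|-e_n/2\,\big||f_n|)$ that absorbs into the left-hand side for $\delta$ small. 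This leaves
\[
c\sum_j\int\big|\,|x|-e_j/2\,\big||f_j|\,dx \le \Phi(\bEstar)-\Phi(\bE) + Q(\mathbf{m}) + O(\delta^3).
\]

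Next, for each $n$ I choose $w_n\in\reals$ (of magnitude $|w_n|\lesssim\delta$) so that the translated tuple $\tilde\bE=(E_n+w_n)_{n\in J}$ satisfies $\tilde m_n=0$; equivalently, $w_n$ is chosen so that the median of $E_n+w_n$ lies at the origin. Running the entire derivation above with $\tilde\bE$ in place of $\bE$, the new quadratic form $Q(\tilde{\mathbf{m}})$ vanishes, and I obtain
\[
c\sum_j\int\big|\,|x|-e_j/2\,\big||\tilde f_j|\,dx \le \Phi(\bEstar)-\Phi(\tilde\bE) + O(\delta^3),
\]
where $\tilde f_j=\one_{E_j+w_j}-\one_{\Estar_j}$. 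Each summand on the left is nonnegative, hence each individual integral is bounded by the right-hand side.

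The hard part is the final comparison $\Phi(\bEstar)-\Phi(\tilde\bE)\le C(\Phi(\bEstar)-\Phi(\bE))+O(\delta^3)$, required because the translations $w_n$ do not in general arise from the $\reals^m$ symmetry group and hence do not preserve $\Phi$. I plan to derive this by subtracting two instances of Lemma~\ref{lemma:quadraticexpansion} to obtain $\Phi(\tilde\bE)-\Phi(\bE)=\sum_n\langle K_n,\tilde f_n-f_n\rangle-Q(\mathbf{m})+O(\delta^3)$, computing the first sum explicitly via the evenness of $K_n$ and the strict negativity of $D^-K_n(e_n/2)$ from strict admissibility to show that, with $w_n\approx -m_n$, it equals $2\sum_n|D^-K_n(e_n/2)|m_n^2+O(\delta^3)$, and combining with the Lipschitz bound $|Q(\mathbf{m})|\lesssim\sum_n m_n^2$ together with the elementary estimate $m_n^2\le 4\int\big|\,|x|-e_n/2\,\big||f_n^+|$ to close the argument by absorption into the master inequality.
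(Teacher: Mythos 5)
Your reduction of the bilinear forms to their corner values and your recentering to kill the masses $m_n$ parallel the paper's Lemmas~\ref{lemma:S0expansion} and~\ref{lemma:balancing}, but the endgame has a genuine gap. The translations $w_n$ are chosen independently for each index and do not arise from the symmetry group $\reals^m$, so $\Phi(\tilde\bE)\ne\Phi(\bE)$, and you propose to pay for this via the chain $|Q(\mathbf m)|\lesssim\sum_n m_n^2\lesssim\sum_n\int\big|\,|x|-e_n/2\,\big|\,|f_n|$ followed by ``absorption into the master inequality.'' That absorption requires the implicit constant in the bilinear bound (governed by the corner values $\ovl_{i,j}(\pm1,\pm1)$) to be smaller than the coercivity constant $c$ in \eqref{eq:onesided} (governed by $|D^-K_j(e_j/2)|$). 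These constants come from unrelated sources, and both $\sum_n m_n^2$ and $\sum_n\int\big|\,|x|-e_n/2\,\big|\,|f_n|$ are generically comparable to $\delta^2$, so the term you are trying to absorb is of exactly the same order as the term you are keeping; nothing forces the favorable sign. (A secondary problem: the identity $\sum_n\langle K_n,\tilde f_n-f_n\rangle=2\sum_n|D^-K_n(e_n/2)|m_n^2+O(\delta^3)$ would require two-sided second-order regularity of $K_n$ at $\pm e_n/2$, whereas the hypotheses supply only one-sided first derivatives of a log-concave function; the error is then only $o(\delta^2)$, which is too weak for the stated conclusion.)

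The paper circumvents any comparison of constants by treating one index at a time. Fix $n$; Lemma~\ref{lemma:balancing} produces $\by\in\reals^m$ with $|\by|=O(\delta)$ such that after the genuine symmetry $E_j\mapsto E_j+L_j(\by)$ --- which leaves $\Phi(\bE)$ unchanged --- one has $F_n\equiv0$, so every bilinear term involving index $n$ drops out of the expansion. The residual sum $\sum_{k\ne n}\langle K_k,f_k\rangle+\sum_{i<j,\ i,j\ne n}\langle L_{i,j},f_i\otimes f_j\rangle$ is then recognized, via Lemma~\ref{lemma:S0expansion}, as the full second-order expansion of $\Phi$ for the modified tuple in which $E_n$ is replaced by $\Estar_n$, and hence is $\le O(\delta^3)$ by Corollary~\ref{cor:quadexpansionsmall} applied to that tuple. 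This isolates $\langle K_n,f_n\rangle$, and \eqref{eq:onesided} finishes the proof with $w_n=L_n(\by)$. If you wish to salvage your global-recentering route, the missing ingredient is an upper bound of the form $Q(\mathbf m)\le\sum_n|D^-K_n(e_n/2)|\,m_n^2+(\text{small error})$, which is essentially the Brascamp--Lieb--Luttinger inequality applied to the translated intervals $(\Estar_j+m_j)_j$ (the content of Proposition~\ref{prop:intervals}), not a consequence of a crude Lipschitz bound on $Q$.
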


Let $\delta=\dorbit$.
The proof of Lemma~\ref{lemma:nearlyinterval} exploits a reduction
of each quadratic form $\langle L_{i,j},f_i\otimes f_j\rangle$ to a corresponding quadratic
form on $\lt(S^0)$.
By the unit sphere $S^0\subset\reals$ we mean $\{-1,1\}$;
by $\int_{S^0}F$ we mean $F(1)+F(-1)$.
Define $F_j:S^0\to\reals$ by 
\begin{equation}\label{eq:Fjdefn} F_j(t) = \int_{|x-te_j/2|\le C\lambda\dorbit} f_j(x)\,dx
\ \text{ for $t=\pm 1$.}  \end{equation}
For $i\ne j\in J$ define $\ovl_{i,j}:S^0\times S^0\to \reals$
by \begin{equation} \ovl_{i,j}(s,t) = L_{i,j}(se_i/2,\,te_j/2).\end{equation}
Write
\[ \langle \ovl_{i,j},F_i\otimes F_j\rangle
= \iint_{S^0\times S^0} \ovl_{i,j}(s,t) F_i(s)F_j(t)\,ds\,dt. \]

\begin{lemma}\label{lemma:S0expansion}
Under the hypotheses of Theorem~\ref{thm:main2} and with the definitions and notations introduced above,
and assuming that $E_j\symdif \Estar_j\subset\{x: \big|\,|x|-e_j/2\,\big|\le C\lambda\dorbit\}$
for each $j\in J$,
\begin{equation} \label{eq:S0exp}
\langle L_{i,j},f_i\otimes f_j\rangle 
= \langle \ovl_{i,j},F_i\otimes F_j\rangle 
+ O(\dorbit^3).  
\end{equation}
\end{lemma}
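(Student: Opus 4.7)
The plan is to approximate $L_{i,j}$ by a piecewise constant function on the (four-piece) support of $f_i \otimes f_j$, using the Lipschitz regularity supplied by Lemma~\ref{lemma:LijLipschitz}.

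First, by the support hypothesis, $f_j$ is supported in $\{x:|x-e_j/2|\le C\lambda\delta\}\cup\{x:|x+e_j/2|\le C\lambda\delta\}$, where $\delta=\dorbit$. Provided $\delta_0$ is small enough that $C\lambda\delta_0<\tfrac14\min_{\be\in S,\,j\in J}e_j$ (possible by compactness of $S$), these two subintervals are disjoint, and similarly for $f_i$. Hence the support of $(x,y)\mapsto f_i(x)f_j(y)$ is contained in the disjoint union of the four boxes
\[
U_{s,t}=\{(x,y)\in\reals^2:|x-se_i/2|\le C\lambda\delta,\ |y-te_j/2|\le C\lambda\delta\},\qquad (s,t)\in\{-1,1\}^2,
\]
so $\langle L_{i,j},f_i\otimes f_j\rangle=\sum_{(s,t)}\iint_{U_{s,t}}L_{i,j}(x,y)f_i(x)f_j(y)\,dx\,dy$.

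Next, shrinking $\delta_0$ further if necessary, each $U_{s,t}$ lies inside the neighborhood of $(se_i/2,te_j/2)$ in which Lemma~\ref{lemma:LijLipschitz} provides a finite Lipschitz constant $M$ for $L_{i,j}$. On $U_{s,t}$,
\[
\bigl|L_{i,j}(x,y)-\ovl_{i,j}(s,t)\bigr|\le M\bigl(|x-se_i/2|+|y-te_j/2|\bigr)\le 2MC\lambda\delta.
\]
Replacing $L_{i,j}(x,y)$ by the constant $\ovl_{i,j}(s,t)$ on $U_{s,t}$ therefore introduces error at most
\[
2MC\lambda\delta\iint_{U_{s,t}}|f_i(x)||f_j(y)|\,dx\,dy,
\]
and summing over $(s,t)$ and using $\norm{f_i}_{L^1}\le 2\delta$, $\norm{f_j}_{L^1}\le 2\delta$ yields a total error of $O(\delta^3)$, as required.

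After the replacement, the contribution of $U_{s,t}$ is
\[
\ovl_{i,j}(s,t)\int_{|x-se_i/2|\le C\lambda\delta}f_i(x)\,dx\cdot\int_{|y-te_j/2|\le C\lambda\delta}f_j(y)\,dy=\ovl_{i,j}(s,t)F_i(s)F_j(t)
\]
by the definition \eqref{eq:Fjdefn} of $F_i,F_j$; summing over the four signs $(s,t)$ produces $\langle \ovl_{i,j},F_i\otimes F_j\rangle$, completing the expansion \eqref{eq:S0exp}. The one genuine subtlety is that the Lipschitz constant $M$ must be chosen uniformly over $\be\in S$; this is not immediate from the statement of Lemma~\ref{lemma:LijLipschitz}, but follows from its proof, in which $M$ is controlled by values and local geometry of $L_{i,j}$ at the four points $(\pm e_i/2,\pm e_j/2)$, quantities that depend continuously on $\be$ and hence are uniformly bounded on the compact set $S$. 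This uniformity is the main point requiring care; everything else is bookkeeping.
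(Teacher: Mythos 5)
Your proposal is correct and follows essentially the same route as the paper: localize $f_i\otimes f_j$ to the four boxes around $(\pm e_i/2,\pm e_j/2)$, use the Lipschitz continuity of $L_{i,j}$ from Lemma~\ref{lemma:LijLipschitz} to replace it by its corner values with error $O(\delta)\norm{f_i}_{L^1}\norm{f_j}_{L^1}=O(\delta^3)$, and recognize the main term as $\langle \ovl_{i,j},F_i\otimes F_j\rangle$ via the definition \eqref{eq:Fjdefn}. Your explicit attention to the uniformity of the Lipschitz constant over the compact set $S$ is a point the paper leaves implicit, and is handled correctly.
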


\begin{proof}
Let $\delta=\dorbit$.
Each $f_j$ vanishes outside a $C\delta$--neighborhood of $\{\pm e_j/2\}$.
By Lemma~\ref{lemma:LijLipschitz}, $L_{i,j}$ is Lipschitz in some neighborhood
of each ordered pair $(\pm e_i/2,\pm e_j/2)$. 
Each point of the support of $f_i$ satisfies $|x-(\pm e_j/2)|=O(\delta)$,
and likewise for $f_j$. If $x_i,x_j$ are close to $e_i/2,e_j/2$, respectively,
then $|f_i(x_i)f_j(x_j)-F_i(1)F_j(1)|=O(\delta)$, with corresponding
bounds for the other points of $S^0\times S^0$.
Therefore
\begin{align*} \langle L_{i,j},f_i\otimes f_j\rangle 
&= \langle \ovl_{i,j},F_i\otimes F_j\rangle + O(\delta)\norm{f_i}_{L^1}\norm{f_j}_{L^1}  
\\&
= \langle \ovl_{i,j},F_i\otimes F_j\rangle + O(\delta^3).  \end{align*}
The conclusion of Lemma~\ref{lemma:S0expansion} now follows directly from
Lemma~\ref{lemma:quadraticexpansion}. 
\end{proof}

\begin{lemma} \label{lemma:balancing}
Let $k\in J$.
There exists $\by\in\reals^m$ such that $|\by|=O(\dorbit)$ and 
the function $\tilde F_k\in \lt(S^0)$ associated via \eqref{eq:Fjdefn} to
$\tilde E_k = E_k+L_k(\by)$ satisfies $\tilde F_k\equiv 0$. 
\end{lemma}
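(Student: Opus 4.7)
The plan is to exploit the constraint $|E_k|=e_k$ to reduce the two conditions $\tilde F_k(1)=0$ and $\tilde F_k(-1)=0$ to a single scalar condition, and then to solve that condition by a one-dimensional direct computation.

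First I would observe that by the reduction carried out earlier, $E_k\symdif \Estar_k$ is contained in $\{x: \big|\,|x|-e_k/2\,\big|\le C_0\delta\}$ for some fixed constant $C_0$ proportional to $\lambda$. Provided that $C\lambda\ge C_0+2$, the two windows appearing in the definition \eqref{eq:Fjdefn} of $F_k(\pm 1)$ together contain the full support of $f_k$, so that
\[ F_k(1)+F_k(-1)\;=\;\int_\reals f_k\;=\;|E_k|-|\Estar_k|\;=\;0.\]
Moreover, for every sufficiently small translation $s\in\reals$ applied to $E_k$, the perturbation $\tilde f_k$ is also supported in this same neighborhood of $\{\pm e_k/2\}$, and the measure constraint is preserved, so the identity $\tilde F_k(1)+\tilde F_k(-1)=0$ persists. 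Hence it will suffice to find $\by\in\reals^m$ with $|\by|=O(\delta)$ such that $\tilde F_k(1)=0$.

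Next I would parametrize: since $L_k$ is surjective, set $s=L_k(\by)$ and allow $s$ to vary over a small real interval. With $I=[e_k/2-C\lambda\delta,\,e_k/2+C\lambda\delta]$ and $|I\cap \Estar_k|=|I|/2$, one has $\tilde F_k(1)=|(I-s)\cap E_k|-|I|/2$. For $|s|\le(C\lambda-C_0)\delta$, the right endpoint of $I-s$ remains strictly to the right of $e_k/2+C_0\delta$ and hence outside $E_k$, while the left endpoint remains strictly to the left of $e_k/2-C_0\delta$ and hence inside $\Estar_k\cap E_k$. A direct differentiation of $s\mapsto|(I-s)\cap E_k|$ then gives derivative identically equal to $1$ on this interval, yielding
\[ \tilde F_k(1)\;=\;F_k(1)+s.\]
The obvious choice $s=-F_k(1)$ forces $\tilde F_k(1)=0$, and then $\tilde F_k(-1)=0$ automatically by the measure identity above. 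Since $|F_k(1)|\le|E_k\symdif\Estar_k|\le 2\delta$, the chosen $s$ lies inside the admissible range provided that $C\lambda$ was fixed (as we may) at least $C_0+2$.

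Finally, to realize $s$ as $L_k(\by)$ I would take $\by$ in the orthogonal complement of $\kernel(L_k)$; surjectivity of $L_k$ yields such a $\by$ with $|\by|\lesssim|s|=O(\delta)$. The only point requiring care is the linear formula $\tilde F_k(1)=F_k(1)+s$, which depends on tracking the positions of the endpoints of $I-s$ relative to $E_k$ and $\Estar_k$; this in turn relies essentially on the reduction to $\bE$ with the supplementary containment property and on the freedom to enlarge $C\lambda$ relative to $C_0$. Everything else in the argument is routine.
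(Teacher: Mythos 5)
Your proposal is correct and follows essentially the same path as the paper: both reduce to the single condition $\tilde F_k(1)=0$ via the mean-zero identity $\tilde F_k(1)+\tilde F_k(-1)=\int\tilde f_k=0$, and both exploit that translating $E_k$ by $s=L_k(\by)$ moves mass across the threshold $e_k/2$ at unit rate thanks to the supplementary containment $E_k\symdif\Estar_k\subset\{\,|\,|x|-e_k/2\,|\le C_0\delta\}$. The only difference is that where the paper verifies continuity of $h(t)=\tilde F_{k,t}(1)$ and a sign change on $[-B\delta,B\delta]$ and invokes the Intermediate Value Theorem, you compute the dependence exactly as the affine formula $\tilde F_k(1)=F_k(1)+s$ and solve in closed form --- a harmless sharpening of the same argument.
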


\begin{proof}
Let $\delta=\dorbit$.
Since $\tilde F_k(1)+\tilde F_k(-1) = \int_\reals \tilde f_k=0$,
it suffices to show that there exists $\by$ such that $\tilde F_k(1)=0$.
Choose $\bv\in\reals^m$ such that $L_k(\bv) > 0$.
Let $h(t)$ be $\tilde F_{k,t}(1)$, where $\tilde F_{k,t}$ is associated via \eqref{eq:Fjdefn} to $E_k+L_k(t\bv)$;
thus so long as $|t|$ is sufficiently small,
\begin{equation}
h(t) = |E_k\cap [\tfrac12 e_k-tL_k(\bv),\infty)| - |[\tfrac14 e_k,\tfrac12 e_k-tL_k(\bv)]\setminus E_k||
\end{equation}
since $E_k\symdif\Estar_k$ is contained in a $C\delta$--neighborhood of $\{\pm e_k/2\}$. 
This is a continuous function of $t$.
Let $B$ be a large constant, independent of $\delta$.
If $t= -B\delta$ then 
$E_k\cap [\tfrac12 e_k-tL_k(\bv),\infty)=\emptyset$,
so $h(t)\le 0$.
If $t= B\delta$ then 
$|E_k\cap [\tfrac12 e_k-tL_k(\bv),\infty)|>0$,
while  $[\tfrac14 e_k,\tfrac12 e_k-tL_k(\bv)]\setminus E_k|=\emptyset$, so $h(t)>0$.
Therefore by the Intermediate Value Theorem, there exists $t\in[-Bt,Bt]$ satisfying $h(t)=0$.
\end{proof}

\begin{proof}[Proof of Lemma~\ref{lemma:nearlyinterval}]
Let $n\in J$. By Lemma~\ref{lemma:balancing} together with the invariance of
$\Phi(\bE)$ under the symmetries $(E_j)\mapsto (E_j+L_j(\bv))$,
we may suppose without loss of generality that 
the associated function $F_n\in\lt(S^0)$ satisfies $F_n\equiv 0$.

Let $J'=J\setminus\{n\}$.
Since any second order term involving $F_n$ vanishes,
\eqref{eq:S0exp} simplifies to
\begin{equation}
\scriptt(\bE)
= \scriptt(\bEstar)
+\sum_{k\in J} \langle K_k,f_k\rangle
+ \sum_{i <j\in J'} 
\langle \ovl_{i,j},F_i\otimes F_j\rangle
+O(\delta^3).
\end{equation}
This expression is rather favorable, for the term $\langle K_n,f_n\rangle$
is rather negative unless $E_n$ nearly coincides with an interval,
while there is no term $\langle \bar L_{i,j},\,F_i\otimes F_j\rangle$
with $i$ or $j$ equal to $n$ to potentially offset this negative term.

Define $\tilde E_j = E_j$ for all $j\ne n$,
and $\tilde E_n = E_n^\star$.
Define $\tilde f_j,\tilde F_j$ to be the associated functions.
Then $\tilde f_i = f_i$ and $\tilde F_i=F_i$ for $i\ne n$, 
while $\tilde f_n\equiv 0$ and $\tilde F_n\equiv 0$.
By Lemma~\ref{lemma:S0expansion},
\[ \sum_{k\in J'} \langle K_k,f_k\rangle
+ \sum_{i < j\in J'} \langle \ovl_{i,j},F_i\otimes F_j\rangle
= \sum_{k\in J'} \langle K_k,f_k\rangle + \sum_{i < j\in J'} 
\langle L_{i,j},f_i\otimes f_j\rangle +O(\delta^3).\] 
By the definitions of $\tilde f_i,\tilde F_i$,
\[
\sum_{k\in J'} \langle K_k,f_k\rangle + \sum_{i < j\in J'} 
\langle L_{i,j},f_i\otimes f_j\rangle
=
\sum_{k\in J} \langle K_k, \tilde f_k\rangle + \sum_{i < j\in J} 
\langle L_{i,j},\tilde f_i\otimes \tilde f_j\rangle.\] 
By applying Corollary~\ref{cor:quadexpansionsmall} to 
$(\tilde E_j: j\in J)$, we conclude that the right-hand side of this equation is $\le O(\delta^3)$. 
Thus we have shown that 
\begin{equation}
\sum_{k\in J'} \langle K_k,f_k\rangle
+ \sum_{i < j\in J'} \langle \ovl_{i,j},F_i\otimes F_j\rangle
\le O(\delta^3).
\end{equation}
Therefore according to \eqref{eq:onesided},
\begin{equation}
\scriptt(\bE) \le \scriptt(\bEstar)
-  c \int_{E_n\symdif\Estar_n} \big|\,|x|-\tfrac{e_n}2\,\big|\,dx +O(\delta^3),
\end{equation}
which is the desired conclusion.
\end{proof}


\begin{proof}[Proof of Proposition~\ref{prop:nearlyintervals}]
Let $w_j$ satisfy the conclusion of Lemma~\ref{lemma:nearlyinterval}.
Choose $I_j$ to be the interval centered at $-w_j$ satisfying $|I_j|=|E_j|$.  Then
\begin{equation*} |E_j\symdif I_j|^2 
= |(E_j+w_j)\symdif \Estar_j|^2
\le C \int_{(E_j+w_j)\symdif\Estar_j} \big|\,|x|-\tfrac{e_j}2\,\big|\,dx.
\end{equation*}
Thus Proposition~\ref{prop:nearlyintervals} follows from Lemma~\ref{lemma:nearlyinterval}.
\end{proof}

\section{Hybrid analysis}\label{section:hybrid}

The proof of Theorem~\ref{thm:main2} can now be completed by combining the quantitative forms 
of two facts established above for tuples $\bE$ that nearly maximize $\Phi$:
each set $E_j$ is nearly an interval, and if every $E_j$ is equal to an interval then the centers 
of those intervals must be nearly compatibly situated.

Define $\bard\le\delta$ by
\begin{equation} \bard= \max_j \inf_I |E_j\symdif I|, \end{equation}
where the infimum is taken over all intervals $I\subset\reals$ satisfying $|I|=|E_j|$.
For each index $j$ choose an interval
$I_j$ satisfying $|E_j\symdif I_j| \le 2\inf_I|E_j\symdif I|\le 2\bard$ and $|I_j|=|E_j|$.
Define
\begin{equation}
\tdelt = \inf_{\bv\in\reals^m} \max_j |I_j\symdif (\Estar_j+L_j(v_j))|.
\end{equation}
Then $\bard+\tdelt \asymp \delta$, that is, the ratio of $\bard+\tdelt$ to $\delta$
is bounded above and below by positive constants so long as $\delta$ is sufficiently small.
In this notation, the conclusion of Proposition~\ref{prop:nearlyintervals} can be restated as
\begin{equation}\label{eq:propNIrestated} \Phi(\bE)\le\Phi(\bEstar) -c\bard^2 + O(\delta^3).\end{equation} 

Choose $\bv\in\reals^m$ to satisfy $\max_j |I_j\symdif (\Estar_j+L_j(v_j))|\le 2\tdelt$.
Replace $E_j$ by $E_j-L_j(v_j)$,
and thus replace $I_j$ by $I_j-L_j(v_j)$, for all $j\in J$.
Thus $\bE$ is modified, but $\Phi(\bE)$ and $\dist(\bE,\scripto(\bEstar))$ are unchanged.

Let $A<\infty$ be a large constant, to be chosen below.
If $\bard\ge A^{-1}\delta$, then the desired inequality $\Phi(\bE)\le\Phi(\bEstar)-c\delta^2$
follows immediately from \eqref{eq:propNIrestated} for all sufficiently small $\delta$, 
with a smaller value of $c$ which depends on the choice of $A$ but is positive for any $A$.
Therefore we may, and do, assume henceforth that $\bard\le A^{-1}\tdelt$.

Set $\bI = (I_j: j\in J)$.
According to Proposition~\ref{prop:intervals},
\begin{equation} \Phi(\bI)\le \Phi(\bEstar) -c\tdelt^2. \end{equation} 
We will relate $\Phi(\bE)$ to $\Phi(\bI)$ in order to exploit this information.
Writing $\one_{E_j} = \one_{I_j}+f_j$, one has $\norm{f_j}_{L^1} = |E_j\symdif I_j|\le 2\bard$. 
Expand
\begin{equation} \label{eq:expanded}
\Phi(\bE) = \Phi(\bI) + \sum_j \langle K_{j,\bI},f_j\rangle + O(\bard^2)
\end{equation}
where $K_{j,\bI}$ are defined by 
\begin{equation}
\int_\reals f_j K_{j,\bI} = \int_{\reals^m} (f_j\circ L_j) \prod_{i\ne j} \one_{I_i}(L_i).
\end{equation}

The properties of the quantities $K_{j,\bI}$ in this expansion
are less favorable, in general, than those
of $K_j = K_{j,\bEstar}$. Nonetheless, we will show that 
\begin{equation} \label{eq:lastpoint} \langle K_{k,\bI},f_k\rangle \le O(\bard\tdelt)
\ \text{ for every $k\in J$.} \end{equation}
As in the analysis above, this is an upper bound merely
for the quantity on the left-hand side of the inequality, not for its absolute value.

Once \eqref{eq:lastpoint} has been established, it will follow using \eqref{eq:expanded} that
\begin{equation} \Phi(\bE) 
\le \Phi(\bEstar) -c\tdelt^2 + C\bard\tdelt + O(\bard^2)
\le \Phi(\bEstar) -c\tdelt^2 + CA^{-1} \tdelt^2.  \end{equation}
Choosing $A = 2Cc^{-1}$, this will complete the proof,
since $\tilde \delta$ is comparable to $\delta$ in the present case.

To establish \eqref{eq:lastpoint}, consider any $k\in J$. By 
replacing $\bE$ by $(E_j-L_j(\bv): j\in J)$ 
where $\bv\in\reals^m$ is chosen so that $L_k(\bv)$ is equal
to the center of $I_k$ and $|\bv|=O(\tdelt)$,
we may reduce to the case in which $I_k = \Estar_k$.  

\begin{lemma}\label{lemma:KkI}
\begin{equation}\label{eq:KkI} \norm{K_{k,\bI}-K_{k,\bEstar}}_{L^\infty} \le C\tdelt.  \end{equation}
\end{lemma}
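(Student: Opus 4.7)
The plan is to bound the pointwise difference $|K_{k,\bI}(y)-K_{k,\bEstar}(y)|$ by telescoping the product over $i\ne k$ and estimating each resulting slice integral using nondegeneracy of $\scriptl$.

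First, by the coarea formula (applied using $L_k$ as one coordinate in a basis of $(\reals^m)^*$), the kernel $K_{k,\bI}(y)$ is a constant multiple of the $(m-1)$-dimensional Lebesgue measure of the slice
\[ S_{k,\bI}(y)\,=\,\bigl\{\bx\in\reals^m\,:\,L_k(\bx)=y,\ L_i(\bx)\in I_i \text{ for all } i\ne k\bigr\},\]
and similarly for $K_{k,\bEstar}(y)$ with $\Estar_i$ replacing $I_i$. Using the standard telescoping identity $\prod_i a_i-\prod_i b_i=\sum_{i_0}(a_{i_0}-b_{i_0})\prod_{i<i_0}b_i\prod_{i>i_0}a_i$ with $a_i=\one_{I_i}(L_i(\bx))$ and $b_i=\one_{\Estar_i}(L_i(\bx))$, and using that each $\one_{I_i}, \one_{\Estar_i}\le \one_{\{|L_i|\le e_i/2+O(\tdelt)\}}$, one obtains
\[ |K_{k,\bI}(y)-K_{k,\bEstar}(y)|\,\le\, C\sum_{i\ne k}\mu_{m-1}\bigl(T_{i,k}(y)\bigr),\]
where $T_{i,k}(y)=\{\bx:L_k(\bx)=y,\ L_i(\bx)\in I_i\symdif\Estar_i,\ |L_j(\bx)|\le e_j/2+O(\tdelt)\ \forall j\ne i,k\}$.

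Next I would bound each slice measure $\mu_{m-1}(T_{i,k}(y))$ by $C\tdelt$ using nondegeneracy. Conditions (ii) and (iii) of Definition~\ref{defn:nondegenerate} together imply that $L_i$ and $L_k$ are linearly independent and that $\{L_j:j\ne k\}$ spans $(\reals^m)^*$; extending $\{L_i\}$ to a basis using vectors from $\{L_j:j\ne i,k\}$, one obtains a subset $J'\subset J$ of cardinality $m$ with $\{i,k\}\subset J'$ such that $\{L_j:j\in J'\}$ is a basis of $(\reals^m)^*$. The affine change of variables $\bx\mapsto(L_j(\bx))_{j\in J'}$ has constant nonzero Jacobian depending only on $\scriptl$, so dropping the constraints for $j\notin J'\cup\{i,k\}$ as an upper bound and integrating gives
\[ \mu_{m-1}(T_{i,k}(y))\,\le\, C\,|I_i\symdif\Estar_i|\prod_{j\in J'\setminus\{i,k\}}\bigl(e_j+O(\tdelt)\bigr).\]
Recall from the reductions preceding the lemma that $I_k=\Estar_k$ and $|I_i\symdif\Estar_i|=O(\tdelt)$ for each $i\ne k$; hence each slice measure is $O(\tdelt)$ uniformly in $y$, and summing the finitely many terms $i\ne k$ yields $\|K_{k,\bI}-K_{k,\bEstar}\|_{L^\infty}\le C\tdelt$ as claimed.

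The one mildly delicate point is the uniformity of constants as $\be$ ranges over the compact set $S$: the Jacobians from the change-of-variable step depend only on $\scriptl$ (hence are fixed), while the constants absorbed in the $O(\tdelt)$ bounds on $|L_j(\bv)|$ and on $|I_i\symdif\Estar_i|$ are inherited from the preceding analysis and are uniform on $S$. No genericity or strict admissibility hypothesis is required in this step; only nondegeneracy of $\scriptl$ is used. The main (indeed only) conceptual ingredient is the existence of a basis of $(\reals^m)^*$ containing $\{L_i,L_k\}$ drawn from $\scriptl$, which is exactly the content of condition (iii).
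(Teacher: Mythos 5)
Your proof is correct and follows essentially the same route as the paper's: test (or evaluate) pointwise, telescope the difference of products over $i\ne k$, and bound each resulting term by selecting a basis $\{L_j:j\in J'\}$ of $(\reals^m)^*$ with $i,k\in J'$ and changing variables, so that the factor $|I_i\symdif \Estar_i|=O(\tdelt)$ emerges while the remaining factors contribute only bounded constants. Your write-up is in fact more explicit than the paper's very terse proof, which leaves the telescoping step and the source of the factor $\tdelt$ implicit.
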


\begin{proof}
Let $\varphi:\reals\to[0,\infty)$ be arbitrary. For $j\in J\setminus \{k\}$
define $g_j = \one_{E_j^\star}$, and define $h_j = \one_{I_j}$.
Then
\[ \int_\reals |K_k(\bI)-K_k(\bEstar)|\varphi
= \int_{\reals^m} \big| \prod_{j\ne k} \one_{I_j}\circ L_j-\prod_{j\ne k} \one_{E_j^\star}\circ L_j\big| \varphi\circ L_k.  \]
Choose $J'\subset J$ so that $k\in J'$
and $\{L_j: j\in J'\}$ is a basis for $\reals^{m*}$.
Then
\[ \int_\reals |K_k(\bI)-K_k(\bEstar)|\varphi_k
\le 2^{|J|-m}\, \norm{\varphi}_1\,\prod_{j\in J'\setminus\{k\}} (|I_j|+|E_j^\star|).  \]
\end{proof}

Recall that $\langle K_{k,\bEstar},f_k\rangle\le 0$, as was shown above.
\eqref{eq:lastpoint} follows directly from Lemma~\ref{lemma:KkI}: 
\begin{align*}
\langle K_{k,\bI},f_k\rangle
& = 
\langle K_{k,\bEstar},f_k\rangle
+ \langle K_{k,\bI}-K_{k,\bEstar},f_k\rangle
\\& \le 0 + \norm{K_{k,\bI}-K_{k,\bEstar}}_\infty\norm{f_k}_1
\\& \le C\tilde\delta\cdot 2\bard.
\end{align*}
This completes the proof of Theorem~\ref{thm:main2}.
\qed

\end{document}